\def\cocoa
\def\cocoal
\def\opn#1#2{\def#1{\mathop{\kern0pt\fam0#2}\nolimits}} % to make operators
\newtheorem{theorem}{Theorem}[section]
\newtheorem{lemma}[theorem]{Lemma}
\newtheorem{corollary}[theorem]{Corollary}
\newtheorem{proposition}[theorem]{Proposition}
\newtheorem{remark}[theorem]{Remark}
\theoremstyle{definition}
\newtheorem{definition}[theorem]{Definition}
\newtheorem{example}[theorem]{Example}
\newtheorem{remark/example}[theorem]{Remark/Example}
\let\oldlabel=\label
\def\prellabel{\marginparsep=1em\marginparwidth=44pt
 \def\label##1{\oldlabel{##1}\ifmmode\else\ifinner\else
 \marginpar{{\footnotesize\ \\ \tt
 ##1}}\fi\fi}}
\numberwithin{equation}{section}
\def \lk{{\operatorname{lk}}}
\def \D{\Delta}
\begin{document}

\title{  Cohen-Macaulayness of  generically complete intersection monomial ideals }
\author{\textbf{Le Dinh Nam and Matteo Varbaro}\\ \\
\textit{Dipartimento di Matematica, Universit\`a di Genova}\\
\textit{Via Dodecaneso 35, 16146 Genova, Italy}\\
\texttt{Email: ledinh@dima.unige.it, varbaro@dima.unige.it}}
\subjclass[2000]{13C14; 05E99.}
\date{}
\keywords{Cohen-Macaulay; generically complete intersection monomial ideals.}
  
\maketitle
\vspace{15pt}
\begin{abstract}
\noindent In this paper we try  to understand which generically complete intersection monomial ideals with fixed radical are Cohen-Macaulay. We are able to give a complete characterization for a special class of simplicial complexes, namely the Cohen-Macaulay complexes without cycles in codimension 1. Moreover, we give sufficient conditions when the square-free monomial ideal has minimal multiplicity.
%
%In this paper, we show how to increase the variables' exponents in the irreducible components of certain squarefree monomial ideals preserving Cohen-Macaulayness.
\end{abstract}
\section{Introduction}

Let $R=k[x_1,\dots,x_n]$ be a polynomial ring over a field $k$ and  $\Delta$ be a simplicial complex on $V=\{v_1,\dots,v_n\}$. The Stanley-Reisner ideal of $\Delta$ is:
$$ I_{\Delta}=\bigcap_{F\in \Im (\Delta)}(x_i:\; v_i \notin F) ,$$
where $\Im (\Delta)$ is the set of facets of $\Delta$. Given an ideal $J\subset R$ such that $\sqrt{J}=I_\D$, it turns out that $R/I_\D$ is Cohen-Macaulay whenever $R/J$ is Cohen-Macaulay. Of course the converse is not true, so in this paper we are going to study the following problem: \textit{How to discribe a family of ideals $J$ such that $R/J$ is Cohen-Macaulay and $\sqrt{J}=I_{\Delta}$?}

We restrict our attention on monomial ideals $J$. This problem has been already considered, for instance see the paper of Miller, Sturmfels and Yanagawa \cite{MSY}. Also, independently and with different proofs, Minh and Trung in \cite{MT} and the second author of this paper in \cite{Va}, characterized the simplicial complexes $\D$ for which all the symbolic powers of $I_\D$ are Cohen-Macaulay. However we consider a different type of family of monomial ideals with a fixed radical, namely the generically complete intersection monomial ideals:
$$I_{\Delta(\alpha)}= \bigcap_{F\in \Im (\Delta)} (x_i^{\alpha_i(F)}:\; v_i \notin F),$$
where $\alpha_i(F)$ are positive integers. In \cite{HTT}, Herzog, Takayama and Terai characterized those simplicial complexes for which $R/I_{\D(\alpha)}$ is Cohen-Macaulay for any choice of $\alpha$. It turns out that such complexes are very rare.

The purpose of this paper is to give conditions, depending on $\D$, on the values $\alpha_i(F)$ in such a way that $R/I_{\D(\alpha)}$ is Cohen-Macaulay. It is easy to see that if $\alpha_i(F)$ is constant  for any $i$, then the depth of $R/I_{\D(\alpha)}$ is equal to the depth of $R/I_\D$. However, even if $R/I_\D$ is Cohen-Macaulay, $R/I_{\D(\alpha)}$ might not be Cohen-Macaulay for ``simple" functions $\alpha$.  For instance consider the triangulation of the projective plane in the picture below (all the visible triangles are actually faces):
\begin{figure*}[!ht]
\centering
%WinTpicVersion3.08
\unitlength 1pt
\begin{picture}(178.2565,183.8759)(108.1205,-237.4382)
% LINE 1 0 3 0
% 38 2392 980 3427 989 3419 989 4026 1892 4026 1892 3412 2813 3412 2813 2377 2804 2377 2804 1777 1892 1777 1892 2385 989 2909 1463 2909 1463 2902 1454 2610 2010 2610 2010 3142 2010 3142 2010 2894 1445 2894 1454 2392 998 2902 1454 3412 998 2902 1463 4019 1901 4011 1901 3134 1992 3134 1992 3419 2831 3134 2001 2385 2804 2385 2804 2632 1992 2632 1992 1792 1883 2617 1983 2392 998
% 
\special{pn 13}%
\special{pa 2355 965}%
\special{pa 3374 974}%
\special{fp}%
\special{pa 3366 974}%
\special{pa 3963 1863}%
\special{fp}%
\special{pa 3963 1863}%
\special{pa 3359 2769}%
\special{fp}%
\special{pa 3359 2769}%
\special{pa 2340 2760}%
\special{fp}%
\special{pa 2340 2760}%
\special{pa 1750 1863}%
\special{fp}%
\special{pa 1750 1863}%
\special{pa 2348 974}%
\special{fp}%
\special{pa 2864 1440}%
\special{pa 2864 1440}%
\special{fp}%
\special{pa 2857 1432}%
\special{pa 2569 1979}%
\special{fp}%
\special{pa 2569 1979}%
\special{pa 3093 1979}%
\special{fp}%
\special{pa 3093 1979}%
\special{pa 2849 1423}%
\special{fp}%
\special{pa 2849 1432}%
\special{pa 2355 983}%
\special{fp}%
\special{pa 2857 1432}%
\special{pa 3359 983}%
\special{fp}%
\special{pa 2857 1440}%
\special{pa 3956 1872}%
\special{fp}%
\special{pa 3948 1872}%
\special{pa 3085 1961}%
\special{fp}%
\special{pa 3085 1961}%
\special{pa 3366 2787}%
\special{fp}%
\special{pa 3085 1970}%
\special{pa 2348 2760}%
\special{fp}%
\special{pa 2348 2760}%
\special{pa 2591 1961}%
\special{fp}%
\special{pa 2591 1961}%
\special{pa 1764 1854}%
\special{fp}%
\special{pa 2576 1952}%
\special{pa 2355 983}%
\special{fp}%
% STR 2 0 3 0
% 3 2370 779 2370 870 5 0
% $v_2$
\put(168.5826,-61.8847){\makebox(0,0){$v_2$}}%
% STR 2 0 3 0
% 3 3412 852 3412 943 2 0
% $v_1$
\put(242.7020,-67.0774){\makebox(0,0)[lb]{$v_1$}}%
% STR 2 0 3 0
% 3 2800 1239 2800 1330 2 0
% $v_6$
\put(199.1693,-94.6054){\makebox(0,0)[lb]{$v_6$}}%
% STR 2 0 3 0
% 3 1520 1859 1520 1950 2 0
% $v_3$
\put(108.1205,-138.7072){\makebox(0,0)[lb]{$v_3$}}%
% STR 2 0 3 0
% 3 2340 2059 2340 2150 2 0
% $v_4$
\put(166.4486,-152.9336){\makebox(0,0)[lb]{$v_4$}}%
% STR 2 0 3 0
% 3 3310 2008 3310 2100 5 0
% $v_5$
\put(235.4466,-149.3770){\makebox(0,0){$v_5$}}%
% STR 2 0 3 0
% 3 4109 1819 4109 1910 5 0
% $v_3$
\put(292.2809,-135.8619){\makebox(0,0){$v_3$}}%
% STR 2 0 3 0
% 3 3412 2822 3412 2913 5 0
% $v_2$
\put(242.7020,-207.2072){\makebox(0,0){$v_2$}}%
% STR 2 0 3 0
% 3 2325 2941 2325 3032 2 0
% $v_1$
\put(165.3816,-215.6719){\makebox(0,0)[lb]{$v_1$}}%
% STR 2 0 3 0
% 3 2909 3342 2909 3433 5 0
% Simplicial complex $\Delta$
\put(206.9227,-244.1958){\makebox(0,0){Simplicial complex $\Delta$}}%
\end{picture}%

\end{figure*}

\vskip 4mm

\noindent With the help of CoCoA \cite{CT} we can check that, for any vertex $i_0$ and any facet $F_0$ not containing $i_0$, we have $R/I_{\Delta(\alpha)}$ is not Cohen-Macaulay for the following $\alpha$: 
\begin{align*}
\alpha_i(F) =
\begin{cases}
2 & \text { if } i=i_0,F=F_0,\\
1& \text {otherwise}
\end{cases}
\end{align*} 

In this paper we are going to face the above problem for a special kind of simplicial complexes, namely the \textit{Cohen-Macaulay complexes without cycles in codimension 1}, which we are going to introduce in Definition \ref{df1}. In this case we give necessary and sufficient conditions on $\alpha$ for $R/I_{\D(\alpha)}$ being Cohen-Macaulay. Without entering into the details, every $\alpha_i$ has to be weakly decreasing along particular shellings (Theorem \ref{thm1}).

By similar tools, in the last section we give sufficient conditions on $\alpha$ for $R/I_{\D(\alpha)}$ to be Cohen-Macaulay when $R/I_{\D}$ has minimal multiplicity (Theorem \ref{thm2}). We will also notice that such conditions are, in general, not necessary.

Some results in this paper have been conjectured and confirmed by using the computer algebra package  CoCoA  \cite{CT}. We wish to thank Aldo Conca for suggesting the problem. We want also to thank Satoshi Murai for introducing us to Example \ref{murai}.

\section{Cohen-Macaulay complex without cycles in codimension 1}
For general facts about commutative algebra and combinatorics see the books of Bruns and Herzog \cite{BH}, Bj\"orner \cite{B}, Stanley \cite{St2} or Miller and Sturmfels \cite{MS}.

Let $V=\{v_1,\dots,v_n\}$ be a finite set. A \textit{simplicial complex} $\Delta$ on $V$ is a collection of subsets of $V$ such that $F\in \Delta$ whenever $F\subset G$ for some $G\in \Delta$, and such that $\{v_i\}\in \Delta$ for $i=1,\dots,n.$ Given finite sets $F_1,\ldots ,F_m$ the simplicial complex on $V=\cup_{i=1}^m F_i$ generated by them, i.e. consisting in all the subsets of any $F_i$, is denoted by $<F_1,\ldots ,F_m>$. The elements of a simplicial complex $\Delta$ are its \textit{faces}. Maximal faces under inclusion are called \textit{facets}. The set of facets is denoted by $\Im (\Delta)$. The \textit{dimension of a face $F$}, dim$F$, is the number $\arrowvert F\arrowvert -1$. The \textit{dimension of $\Delta$} is: \[\text{dim}\Delta=\text{max}\{\text{dim}F : F\in \Delta \}.\] 
A simplicial complex  is \textit{pure} if all its facets are of the same dimension. It is called \textit{strongly connected} if each pair $F,G\in \Im (\Delta)$ can be connected by a \textit{strongly connected sequence}, i.e. a sequence of facets $F=F_0,F_1,\dots,F_k=G$ such that $|F_i\cap F_{i+1}|=d-1$ for all $i=0,\dots,k-1$, where $\dim \D =d-1$. We will say that $\Delta$ is \textit{shellable} if it is pure and it can be given a linear order $F_1,\dots,F_m$ to the facets of $\Delta$ in a way that  $<F_i>\cap <F_1,\dots,F_{i-1}>$ is generated by a non-empty set of maximal proper faces of $<F_i>$ for all $i=2,\dots,m$. Such a linear order is called a \textit{shelling} of $\Delta$. The \textit{link} of a face $F$ of $\D$ is the simplicial complex $\lk_{\Delta}(F)=\{G:F\cup G\in \Delta,F\cap G=\emptyset\}$.

The relations between commutative algebra and combinatorics come from the \textit{ Stanley-Reisner ideal} of $\Delta$, denoted by $I_{\D}$: it is the ideal generated by all monomials $x_{i_1}\dots x_{i_s}$ such that $\{v_{i_1},\dots,v_{i_s}\}\notin \Delta$.  If the \textit{Stanley-Reisner ring} $k[\Delta]=k[x_1,\dots,x_n]/I_{\Delta}$ is a Cohen-Macaulay ring, then $\Delta$ is called a \textit{Cohen-Macaulay complex}.

The following are well known facts:
\begin{itemize}
\item[-] $\Delta$ is shellable $\Rightarrow$ $\Delta$ is Cohen-Macaulay $\Rightarrow$ $\Delta$ is pure.
\item[-] If $\Delta$ is Cohen-Macaulay, then  $\Delta$ and $\lk_{\Delta}(F)$ are strongly connected for all faces $F$ of $\Delta.$
\end{itemize}

\begin{lemma}
\label{lm1}
Let $\Delta$ be a $(d-1)$-dimensional Cohen-Macaulay complex and $F,G\in \Im (\Delta)$ with $|F\cap G|<d-1$. Then, there exists a facet $H\in \Im(\Delta)$ such that $(F\cap G)\subset (H\cap G)$ and $|H\cap G|=d-1.$
\end{lemma}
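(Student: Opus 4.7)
The plan is to reduce the statement to a one-step connectivity claim by passing to the link of the common face $F \cap G$. Let $\Gamma = \lk_\Delta(F \cap G)$. Since $\Delta$ is Cohen-Macaulay, so is $\Gamma$, hence by the second of the two facts recalled just before the lemma, $\Gamma$ is strongly connected. Note that $\Gamma$ has dimension $d - 1 - |F \cap G|$, so its facets have cardinality $d - |F \cap G|$.

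Set $F' = F \setminus (F \cap G)$ and $G' = G \setminus (F \cap G)$. Both are facets of $\Gamma$, they are disjoint by construction, and each has cardinality $d - |F \cap G| \geq 2$ because $|F \cap G| < d-1$. In particular $F' \neq G'$, so $\Gamma$ has at least two facets. Applying strong connectedness of $\Gamma$ to the pair $G', F'$, one obtains a sequence $G' = K_0, K_1, \ldots, K_k = F'$ of facets of $\Gamma$ with $|K_i \cap K_{i+1}| = d - |F \cap G| - 1$ for every $i$. Take $K = K_1$; what matters is just that $K$ is a facet of $\Gamma$ different from $G'$ satisfying $|K \cap G'| = d - |F \cap G| - 1$.

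Define $H = K \cup (F \cap G)$. Because the facets of $\Gamma$ are exactly the sets $L \setminus (F \cap G)$ with $L \in \Im(\Delta)$ containing $F \cap G$, $H$ is a facet of $\Delta$. Using $K \cap (F \cap G) = \emptyset$ (since $K \in \Gamma$) and $G' \cap (F \cap G) = \emptyset$, a short set-theoretic computation gives
\[
H \cap G \;=\; (K \cup (F \cap G)) \cap (G' \cup (F \cap G)) \;=\; (K \cap G') \,\sqcup\, (F \cap G),
\]
so $F \cap G \subset H \cap G$ and $|H \cap G| = (d - |F \cap G| - 1) + |F \cap G| = d - 1$, as required.

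The only real content of the argument is the appeal to strong connectedness of the link, which comes for free from the Cohen-Macaulay hypothesis; the bookkeeping between $\Delta$ and $\Gamma$ is routine, and the sole thing to be careful about is that $|F \cap G| < d-1$ is exactly what ensures $F' \neq G'$ so that the strongly connected chain in $\Gamma$ is non-trivial and produces a genuinely new facet $K \neq G'$.
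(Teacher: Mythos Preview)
Your proof is correct and follows essentially the same approach as the paper: pass to the link of $F\cap G$, use its strong connectedness to get a chain from $G'$ to $F'$, and pull back the facet adjacent to $G'$; the paper runs the chain from $F'$ to $G'$ and takes $F_{k-1}'$, which amounts to the same thing. Your write-up is in fact more careful about the bookkeeping (dimensions in the link, the computation of $H\cap G$, and why $F'\neq G'$).
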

\begin{proof}
From what said above $\lk_\D(F\cap G)$ is strongly connected. Set $G'=G \setminus (F\cap G)$ and $F'=F\setminus (G\cap F)$. There exists a strongly connected sequence $F'=F_0',F_1',\dots,F_k'=G'$ of facets of $\lk_\D(F\cap G)$. Then it is enough to set $H=F_{k-1}'\cup (F\cap G)$. The lemma is proved.
\end{proof}
Let $F$ be a face of $\Delta$. Denote by $B_F$ the ideal $(x_i: v_i\notin F)$. Lemma \ref{lm1} yields the useful corollary below.\begin{corollary}
Let $\Delta$ be a $(d-1)$-dimensional Cohen-Macaulay complex with $\Im(\Delta)=\{F_1,\dots,F_m\}$. Then, for all $i=1,\dots,m$, 
$$ \bigcap_{j\not=i}B_{F_j}+B_{F_i}=\bigcap_{{j\not=i}\atop{|F_j\cap F_i|=d-1}}B_{F_j\cap F_i}. $$
\end{corollary}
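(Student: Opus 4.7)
The plan is to unpack both sides as monomial ideals and compare them via their defining conditions on monomial supports. For a monomial $x^a$ with support $S=\{v_l:x_l\mid x^a\}$, recall that since both $\bigcap_{j\ne i}B_{F_j}$ and $B_{F_i}$ are monomial ideals, a monomial lies in their sum iff it lies in at least one of the two summands; therefore $x^a$ belongs to the left-hand side exactly when either $S\not\subseteq F_j$ for every $j\ne i$, or $S\not\subseteq F_i$. Similarly $x^a$ belongs to the right-hand side exactly when $S\not\subseteq F_j\cap F_i$ for every $j\ne i$ with $|F_j\cap F_i|=d-1$.

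The inclusion $\subseteq$ is then routine: if either $S\not\subseteq F_i$ or $S\not\subseteq F_j$ for all $j\ne i$, then $S\not\subseteq F_j\cap F_i$ trivially. The reverse containment is where the real work lies, and I would argue it by contradiction. Assume the right-hand condition holds while the left-hand condition fails; then $S\subseteq F_i$ and also $S\subseteq F_j$ for some $j\ne i$, so $S\subseteq F_j\cap F_i$. If $|F_j\cap F_i|=d-1$ this directly contradicts the right-hand condition. Otherwise $|F_j\cap F_i|<d-1$, which is precisely the hypothesis of Lemma \ref{lm1}: it produces a facet $H$ with $F_j\cap F_i\subseteq H\cap F_i$ and $|H\cap F_i|=d-1$. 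Since $|H\cap F_i|=d-1<d$ we have $H\ne F_i$, so $H=F_l$ for some $l\ne i$; then $S\subseteq F_l\cap F_i$ with $|F_l\cap F_i|=d-1$ again contradicts the right-hand hypothesis.

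The Cohen-Macaulay assumption enters only through this appeal to Lemma \ref{lm1}, which lets me replace an intersection $F_j\cap F_i$ of codimension greater than one by a codimension-one ``bridge'' $F_l\cap F_i$; the rest is a purely formal translation between monomial ideal membership and set-theoretic containment of supports in facets. I therefore expect no serious obstacle beyond recognising that the essential content of the corollary is that, in a Cohen-Macaulay complex, the maximal intersections $F_j\cap F_i$ (over $j\ne i$) all have cardinality $d-1$, so they are the only ones that need to appear on the right.
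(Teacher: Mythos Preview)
Your proof is correct and follows essentially the same route as the paper: both arguments reduce the statement to Lemma~\ref{lm1}, which supplies the codimension-one ``bridge'' facet $H$ that makes the low-intersection terms redundant. The only difference is packaging: the paper works directly with ideal identities, writing
\[
\bigcap_{j\ne i}B_{F_j}+B_{F_i}=\bigcap_{j\ne i}(B_{F_j}+B_{F_i})=\bigcap_{j\ne i}B_{F_j\cap F_i}
\]
(using that the lattice of ideals generated by subsets of the variables is distributive and that $B_F+B_G=B_{F\cap G}$), and then invokes Lemma~\ref{lm1} to drop the terms with $|F_j\cap F_i|<d-1$. Your monomial-support translation is a faithful unpacking of exactly these identities; it is a bit longer but entirely equivalent.
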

\begin{proof}
For $i=1,\dots,m$ we have
$$ \bigcap_{j\not=i}B_{F_j}+B_{F_i}=\bigcap_{j\not=i}(B_{F_j}+B_{F_i})=\bigcap_{j\not=i}(B_{F_j\cap F_i}).$$
 Using Lemma \ref{lm1}, we have the corollary.  
\end{proof}

\begin{definition}
\label{df1}
Let $\Delta$ be a $(d-1)$-dimensional pure simplicial complex. We recall that the facet graph of $\D$ (see White \cite{Wh}), denoted by $G(\Delta)$, is defined as follow:
\begin{itemize}
\item [-] The set of vertices is $V(G(\Delta))=\Im (\Delta),$
\item [-] The set of egdes is $$ E(G(\Delta))=\{\{F,G\}: F,G\in \Im (\Delta) \text{ and } |F\cap G|=d-1\}.$$  
\end{itemize}

\begin{remark}
Notice that a pure simplicial complex $\D$ is strongly connected if and only if $G(\D)$ is connected.
\end{remark}

We say that  $\Delta$ is  a \textit{Cohen-Macaulay complex without cycles in codimension 1} if $\Delta$ is Cohen-Macaulay and $G(\Delta)$ is a tree.
\end{definition}
\begin{lemma}
\label{CMT}
Let $\Delta$ be a $(d-1)$-dimensional Cohen-Macaulay complex without cycles in codimension 1 and $F_1,\dots,F_k$ be a strongly connected sequence with $k\geq 2$. Then we have $ (F_k\cap F_1)\subset (F_2\cap F_1).$
\end{lemma}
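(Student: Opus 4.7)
The plan is to combine two tree-theoretic observations: by hypothesis $G(\D)$ is a tree, so between any two of its vertices there is a unique simple path; and for each vertex $v$ of $V$, the set $S_v:=\{F\in\Im(\D):v\in F\}$ of facets of $\D$ containing $v$ spans a subtree of $G(\D)$. For $v\in F_1\cap F_k$ these two facts together will force $F_2$ to contain $v$, which is exactly the inclusion to prove.

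First I would verify the subtree property of $S_v$. Since $\D$ is Cohen-Macaulay, so is $\lk_\D(v)$, and hence $\lk_\D(v)$ is strongly connected. For any $F,G\in S_v$, a strongly connected sequence $F\setminus\{v\}=H_0,\dots,H_\ell=G\setminus\{v\}$ in $\lk_\D(v)$ produces, after adjoining $v$ to each $H_i$, a sequence $H_0\cup\{v\},\dots,H_\ell\cup\{v\}$ of facets of $\D$ all containing $v$ and with $|(H_i\cup\{v\})\cap(H_{i+1}\cup\{v\})|=(d-2)+1=d-1$. This exhibits a walk in $G(\D)$ from $F$ to $G$ that stays inside $S_v$, so $S_v$ is connected in $G(\D)$; a connected subgraph of a tree being a subtree, the claim follows.

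Then I would conclude as follows. The strongly connected sequence $F_1,\dots,F_k$ is a simple walk in the tree $G(\D)$ from $F_1$ to $F_k$, so by uniqueness of paths in trees it coincides with the unique $G(\D)$-path between $F_1$ and $F_k$. Fix any $v\in F_k\cap F_1$; then $F_1,F_k\in S_v$, and since $S_v$ is a subtree the entire unique path from $F_1$ to $F_k$ lies in $S_v$. In particular $F_2\in S_v$, i.e.\ $v\in F_2$, and so $v\in F_1\cap F_2$. This proves $(F_k\cap F_1)\subseteq(F_2\cap F_1)$.

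The hard part will be the subtree claim for $S_v$: this is the only step where the Cohen-Macaulay hypothesis is genuinely used, and it relies on transporting the strong connectedness of $\lk_\D(v)$ back to a walk inside $S_v$ in $G(\D)$. Once that is established, the lemma follows immediately from the uniqueness of paths in the tree $G(\D)$.
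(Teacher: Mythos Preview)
Your proof is correct and follows essentially the same approach as the paper's: both arguments use the strong connectedness of $\lk_\Delta(v)$ to produce a walk in $G(\Delta)$ from $F_1$ to $F_k$ entirely through facets containing $v$, and then invoke uniqueness of paths in the tree $G(\Delta)$ to conclude that $F_2$ must contain $v$. Your packaging of the link argument as the separate claim ``$S_v$ spans a subtree of $G(\Delta)$'' and the direct (rather than by-contradiction) conclusion is slightly cleaner, but the substance is identical.
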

\begin{proof}
We can assume  $F_1=\{v_1,\dots,v_d\},F_2=\{v_2,\dots,v_{d+1}\}$ and $k>2$. Because $G(\Delta)$ is a tree, $|F_1\cap F_k|<d-1.$
If $ (F_k\cap F_1)\not\subset (F_2\cap F_1),$ then $v_1\in F_k$. Moreover, we have $\lk_{\Delta}\{v_1\}$ is strongly connected. Set $F_1'=F_1\setminus \{v_1\}$ and $F_k'=F_k\setminus \{v_1\}$. There exists a sequence of facets of $\lk_{\Delta}\{v_1\}$, namely $F_1',F_{t_1}',\dots,F_{t_h}',F_k'$, such that $|F_1'\cap F_{t_1}'|=|F_{t_1}'\cap F_{t_2}'|=\dots=|F_{t_h}'\cap F_k'|=d-2.$ So we have the strongly connected sequence  $F_1,F_{t_1},\dots,F_{t_h},F_k,$ with $F_{t_j}=\{v_1\}\cup F_{t_j}'$ for all $j=1,\dots,h$.
On the other hand, since $G(\Delta)$ is a tree, then the sequence $F_1,F_{t_1},\dots,F_{t_h},F_k$ coincides with the sequence $F_1,F_2,\dots,F_k$. So 
$F_2=\{v_1\}\cup F_{t_1}'$. This is a contradiction.
\end{proof}
\begin{corollary}
A Cohen-Macaulay complex without cycles in codimension 1 is shellable.
\end{corollary}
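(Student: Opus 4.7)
My plan is to produce a shelling by ordering the facets according to a rooted-tree traversal of $G(\Delta)$ and then to verify the shelling axiom with a single application of Lemma \ref{CMT}. Concretely, I would pick any facet $F_1$ as the root, and enumerate the facets $F_1,F_2,\ldots,F_m$ by a breadth-first (or, more generally, any) traversal of the tree $G(\Delta)$ starting at $F_1$. The point of such an order is that, since $G(\Delta)$ is a tree, for every $i\ge 2$ there is a unique index $p(i)<i$ with $\{F_{p(i)},F_i\}\in E(G(\Delta))$; by definition of the facet graph this forces $|F_{p(i)}\cap F_i|=d-1$, so $F_{p(i)}\cap F_i$ is a maximal proper face of $\langle F_i\rangle$.

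The central claim is then that for every $j<i$ one has $F_i\cap F_j\subseteq F_i\cap F_{p(i)}$. Once this is shown, it follows immediately that
\[ \langle F_i\rangle\cap\langle F_1,\ldots,F_{i-1}\rangle\;=\;\bigcup_{j<i}\langle F_i\cap F_j\rangle\;=\;\langle F_i\cap F_{p(i)}\rangle, \]
which is generated by the single maximal proper face $F_i\cap F_{p(i)}$ of $\langle F_i\rangle$, and this is exactly the shelling condition at step $i$. To prove the claim I would argue as follows. In the tree $G(\Delta)$ there is a unique path from $F_i$ to $F_j$; since $j<i$, the facet $F_j$ does not lie in the subtree rooted at $F_i$, so this path must leave that subtree by its only exit, namely the edge $\{F_i,F_{p(i)}\}$. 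The path therefore provides a strongly connected sequence $F_i,F_{p(i)},G_2,\ldots,G_s=F_j$ of facets, and Lemma \ref{CMT} applied to it gives exactly $F_j\cap F_i\subseteq F_{p(i)}\cap F_i$.

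I do not foresee any serious obstacle: the natural idea of traversing $G(\Delta)$ as a rooted tree is made to work by one clean invocation of Lemma \ref{CMT}, and the only step needing a moment of thought is the assertion that the path from $F_i$ to $F_j$ begins with the edge to $F_{p(i)}$. The only degenerate case worth mentioning is $\dim\Delta=0$, in which $F_i\cap F_{p(i)}=\emptyset$; this is still the unique maximal proper face of $\langle F_i\rangle$ under the usual convention that $\emptyset$ is a face, so the argument goes through.
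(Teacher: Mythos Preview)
Your proposal is correct and takes essentially the same approach as the paper: both arguments produce the shelling by ordering the facets so that each $F_i$ has a unique neighbor $F_{p(i)}$ among $F_1,\ldots,F_{i-1}$ in the tree $G(\Delta)$ (the paper phrases this as ``$F_j$ is a free vertex of $G(\Delta)|_{\{F_1,\ldots,F_j\}}$''), and both then invoke Lemma~\ref{CMT} to force $F_j\cap F_i\subseteq F_{p(i)}\cap F_i$ for all $j<i$. Your write-up is more explicit than the paper's terse ``by Lemma~\ref{CMT} and induction on $m$'', but the content is the same.
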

\begin{proof}
Let $\Delta$ be a Cohen-Macaulay complex without cycles in codimension 1. Because $G(\Delta)$ is a tree, we can choose a linear order $F_1,\dots,F_m$ over $\Im(\Delta)$ such that $F_j$ is a free vertex of $G(\Delta)_{|\{F_1,\dots,F_j\}}$, i.e., there exists only one edge of $G(\Delta)_{|\{F_1,\dots,F_j\}}$ which contains $F_j$. By using Lemma \ref{CMT} and induction on $m$,  it is easy to show that $F_1,\dots,F_m$  is a shelling of $\Delta$. Hence, $\Delta$ is shellable.
\end{proof}
\begin{lemma}
\label{fvo}
Let $F_1,\dots,F_m$ be a shelling of a Cohen-Macaulay complex $\D$ without cycles in codimension 1. Then $F_m$ is a free vertex of $G(\Delta)$.
\end{lemma}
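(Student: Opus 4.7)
My plan is to argue by contradiction. Assume $F_m$ has two distinct neighbors $F_p$ and $F_q$ in $G(\Delta)$. The idea is to exploit that the initial segment of the shelling is itself Cohen-Macaulay, and then use the tree hypothesis to derive a contradiction with strong connectivity.

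First I set $\Gamma=\langle F_1,\dots,F_{m-1}\rangle$. Since $\Delta$ is pure of dimension $d-1$ (Cohen-Macaulay complexes are pure), the facets of $\Gamma$ are precisely $F_1,\dots,F_{m-1}$ and $\Gamma$ is pure of the same dimension. The given shelling restricts to a shelling of $\Gamma$, so $\Gamma$ is shellable and hence Cohen-Macaulay; by the bullets recalled before Lemma~\ref{lm1}, $\Gamma$ is therefore strongly connected, and by the remark following Definition~\ref{df1} its facet graph $G(\Gamma)$ is connected. Observe that $G(\Gamma)$ is nothing but the graph obtained from $G(\Delta)$ by deleting the vertex $F_m$ and every edge incident to it, because the codimension-$1$ intersection condition $|F_i\cap F_j|=d-1$ is the same in $\Gamma$ and in $\Delta$.

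Since $G(\Delta)$ is a tree containing the edges $\{F_p,F_m\}$ and $\{F_m,F_q\}$, the unique path from $F_p$ to $F_q$ in $G(\Delta)$ must pass through $F_m$; otherwise such a path together with $F_p\text{--}F_m\text{--}F_q$ would produce a cycle. Deleting $F_m$ therefore separates $F_p$ from $F_q$ in $G(\Gamma)$, contradicting the connectivity established above. Hence $F_m$ has at most one neighbor in $G(\Delta)$. On the other hand, the shelling condition applied at $i=m$ provides some maximal proper face $F_m\setminus\{v\}$ lying inside $F_m\cap F_i$ for some $i<m$; since $F_i\neq F_m$, this forces $|F_i\cap F_m|=d-1$, so $\{F_i,F_m\}$ is an edge of $G(\Delta)$. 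Combining the two bounds, $F_m$ has exactly one neighbor, that is, $F_m$ is a free vertex.

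I do not expect a real obstacle here: the only point requiring some care is the identification $G(\Gamma)=G(\Delta)\setminus F_m$, which depends on the inherited purity so that ``codimension $1$'' in $\Gamma$ and in $\Delta$ refer to the same dimension. Everything else is a direct application of the structural facts recalled before Lemma~\ref{lm1} together with the defining property of the shelling.
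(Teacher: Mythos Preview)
Your proof is correct and follows essentially the same approach as the paper's: both argue by contradiction, use that the initial segment $\langle F_1,\dots,F_{m-1}\rangle$ is shellable and hence strongly connected, and then exploit the tree hypothesis on $G(\Delta)$. The only cosmetic difference is that the paper phrases the contradiction as explicitly exhibiting a cycle $F_h,\dots,F_k,F_m,F_h$ in $G(\Delta)$, whereas you phrase it as $F_m$ being a cut vertex of the tree; you also spell out why $F_m$ has at least one neighbor, a detail the paper leaves implicit.
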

\begin{proof}
If $F_m$ is not a free vertex of $G(\D)$, then there exist distinct numbers $h,k<m$ such that $|F_h\cap F_m|=|F_k\cap F_m|=d-1,$ where $\dim(\D)=d-1$. But $<F_1,\dots,F_{m-1}> $ is shellable too. In particular, it is strongly connected. Then there exists a strongly connected sequence $F_h,F_{t_1},\dots,F_{t_s},F_k$, with each $t_i<m$. Therefore we have a cycle $F_h,F_{t_1},\dots,F_{t_s},F_k, F_m,F_h$ in $G(\D)$, a contradiction.
\end{proof}
\begin{definition}
\label{df2}
Let $\Delta$ be a $(d-1)$-dimensional pure simplicial complex. For any $i=1,\dots,n$ we define the graph $G^i(\Delta)$ as follow:
\begin{itemize}
\item [-] The set of vertices is $V(G^i(\Delta))=\{V_i\}\cup \{F\in \Im (\Delta): v_i\notin F\},$ where $V_i$ is a new vertex.
\item [-] The set of egdes  is
$$E(G^i(\Delta))=
 \{\{F,G\}:  |F\cap G|=d-1\}\cup $$
$$
\{\{V_i,F\}:\text { there exists a facet } G\ni v_i \text{ and } |G\cap F|=d-1\}.
$$
\end{itemize}
The graph $G^i(\Delta)$ is called the $v_i$-\textit{graph} of $\Delta.$
\end{definition}
\begin{remark}
If $\Delta$ is a Cohen-Macaulay complex,  $G(\Delta)$ and $G^i(\Delta)$ are connected for $i=1,\dots,n.$
\end{remark}
\begin{lemma}
\label{Gi}
Let $\Delta$ be a Cohen-Macaulay complex without cycles in codimension 1. Then $G^i(\Delta)$ is a tree for all $i=1,\dots,n.$
\end{lemma}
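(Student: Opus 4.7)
The plan is to verify the two defining properties of a tree for $G^i(\Delta)$, namely connectedness and acyclicity. Connectedness is exactly the content of the remark just preceding the statement (it holds for every Cohen-Macaulay complex), so the real work is to rule out cycles. I would argue by contradiction: suppose $G^i(\Delta)$ contains a simple cycle $C$, and split into two cases according to whether the extra vertex $V_i$ appears in $C$.

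If $V_i \notin C$, then every edge of $C$ joins two facets $F,G$ of $\Delta$ with $|F\cap G|=d-1$, which is precisely the edge condition for $G(\Delta)$. Thus $C$ is a cycle in $G(\Delta)$, directly contradicting the hypothesis that $G(\Delta)$ is a tree.

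If $V_i \in C$, I would write $C$ as $V_i - F_a - F_{a+1} - \cdots - F_b - V_i$ with $F_a\neq F_b$ and all the $F_j$ missing $v_i$. By the definition of the edges at $V_i$, I can pick facets $G_a, G_b \in \Im(\Delta)$ containing $v_i$ with $|F_a\cap G_a|=|F_b\cap G_b|=d-1$. Since $\Delta$ is Cohen-Macaulay, $\lk_\Delta(\{v_i\})$ is strongly connected, so I can join $G_a$ and $G_b$ by a simple strongly connected sequence of facets of $\Delta$ all containing $v_i$, say $G_a=H_0,H_1,\dots,H_r=G_b$ (with the $H_j$ pairwise distinct, by shortcutting if necessary). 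Concatenating this $H$-path with the $F$-segment of $C$ produces the closed walk $F_a - F_{a+1} - \cdots - F_b - H_r - H_{r-1} - \cdots - H_0 - F_a$ inside $G(\Delta)$.

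The step I expect to be the main obstacle is upgrading this closed walk into a genuine simple cycle of $G(\Delta)$, which is where the contradiction must come from. The key observation is the clean separation provided by $v_i$: every $H_j$ contains $v_i$ while none of the $F_j$ does, so the two halves of the walk involve disjoint vertex sets; combined with $F_a\neq F_b$ and the simplicity of the $H$-path, this forces the walk to be a simple cycle of length at least three in $G(\Delta)$. Since $G(\Delta)$ is assumed to be a tree, this contradiction rules out Case 2 and completes the proof.
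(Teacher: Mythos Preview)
Your proof is correct, but it follows a genuinely different route from the paper's argument. The paper invokes Lemma~\ref{CMT} directly: a cycle in $G^i(\Delta)$ through $V_i$ yields a strongly connected sequence $F_1,\dots,F_k$ with $v_i\in F_1\cap F_k$ and $v_i\notin F_2,\dots,F_{k-1}$; since Lemma~\ref{CMT} gives $F_1\cap F_k\subset F_1\cap F_2$, one concludes $v_i\in F_2$, a contradiction. Your argument instead bypasses Lemma~\ref{CMT} entirely and manufactures an explicit cycle in $G(\Delta)$ by splicing the $v_i$-free arc of $C$ onto a path of facets containing $v_i$ obtained from the strong connectedness of $\lk_\Delta(\{v_i\})$, exploiting the fact that the two arcs are vertex-disjoint because $v_i$ separates them. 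The paper's approach is shorter precisely because it cashes in work already done in Lemma~\ref{CMT}; your approach is more self-contained and makes the contradiction with the tree hypothesis on $G(\Delta)$ completely visible, at the cost of repeating the link-connectedness maneuver that also underlies the proof of Lemma~\ref{CMT}.
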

\begin{proof}
Because $G(\Delta)$ is a tree,  $G^i(\Delta)$ is not a tree if and only if there exists a strongly connected sequence of facets $F_1,\dots,F_k$ such that  $v_i\in F_1,F_k$ and $v_i\notin F_j$ for $j=2,\dots,k-1.$ But by Lemma \ref{CMT} we have $ (F_k\cap F_1)\subset (F_2\cap F_1)$. The proof is completed.
\end{proof}
\begin{example}
\label{ex1} Consider the following simplicial complex  $\Delta$:  

\begin{figure*}[!ht]
\centering
%WinTpicVersion3.08
\unitlength 1pt
\begin{picture}(286.5193,124.6231)(  1.4226,-138.3515)
% LINE 2 0 3 0
% 14 388 415 1018 1639 1013 1634 1659 411 1659 405 383 411 1659 1625 1018 405 1018 411 392 1634 392 1634 1654 1634 1336 1015 703 1015
% 
\special{pn 8}%
\special{pa 382 409}%
\special{pa 1002 1614}%
\special{fp}%
\special{pa 998 1609}%
\special{pa 1633 405}%
\special{fp}%
\special{pa 1633 399}%
\special{pa 377 405}%
\special{fp}%
\special{pa 1633 1600}%
\special{pa 1002 399}%
\special{fp}%
\special{pa 1002 405}%
\special{pa 386 1609}%
\special{fp}%
\special{pa 386 1609}%
\special{pa 1628 1609}%
\special{fp}%
\special{pa 1315 1000}%
\special{pa 692 1000}%
\special{fp}%
% STR 2 0 3 0
% 3 675 508 675 570 5 0
% $F_1$
\put(48.0140,-40.5452){\makebox(0,0){$F_1$}}%
% STR 2 0 3 0
% 3 1001 761 1001 817 5 0
% $F_2$
\put(71.2030,-58.1148){\makebox(0,0){$F_2$}}%
% STR 2 0 3 0
% 3 1343 481 1343 567 5 0
% $F_3$
\put(95.5301,-40.3318){\makebox(0,0){$F_3$}}%
% STR 2 0 3 0
% 3 1034 1131 1034 1189 5 0
% $F_4$
\put(73.5504,-84.5758){\makebox(0,0){$F_4$}}%
% STR 2 0 3 0
% 3 715 1353 715 1441 5 0
% $F_5$
\put(50.8593,-102.5011){\makebox(0,0){$F_5$}}%
% STR 2 0 3 0
% 3 1351 1344 1351 1432 5 0
% $F_6$
\put(96.0992,-101.8609){\makebox(0,0){$F_6$}}%
% STR 2 0 3 0
% 3 1030 1943 1030 2030 5 0
% Simpilicial complex $\Delta$
\put(73.2658,-144.3977){\makebox(0,0){Simpilicial complex $\Delta$}}%
% STR 2 0 3 0
% 3 2819 335 2819 420 2 0
% $F_1$
\put(200.5208,-29.8754){\makebox(0,0)[lb]{$F_1$}}%
% STR 2 0 3 0
% 3 3870 326 3870 411 2 0
% $F_3$
\put(275.2804,-29.2352){\makebox(0,0)[lb]{$F_3$}}%
% STR 2 0 3 0
% 3 3531 875 3531 933 2 0
% $F_2$
\put(251.1667,-66.3661){\makebox(0,0)[lb]{$F_2$}}%
% STR 2 0 3 0
% 3 3564 1274 3564 1350 2 0
% $F_4$
\put(253.5141,-96.0281){\makebox(0,0)[lb]{$F_4$}}%
% STR 2 0 3 0
% 3 2760 1542 2760 1600 1 0
% $F_5$
\put(196.3240,-113.8110){\makebox(0,0)[lt]{$F_5$}}%
% STR 2 0 3 0
% 3 3890 1572 3890 1660 1 0
% $F_6$
\put(276.7031,-118.0789){\makebox(0,0)[lt]{$F_6$}}%
% STR 2 0 3 0
% 3 3590 1953 3590 2040 5 0
% Graph $G(\Delta)$
\put(255.3635,-145.1091){\makebox(0,0){Graph $G(\Delta)$}}%
% LINE 2 0 3 0
% 10 2989 414 3519 716 3519 716 4048 404 3525 710 3525 1323 3525 1323 2996 1625 3519 1329 4048 1630
% 
\special{pn 8}%
\special{pa 2942 408}%
\special{pa 3464 705}%
\special{fp}%
\special{pa 3464 705}%
\special{pa 3985 398}%
\special{fp}%
\special{pa 3470 699}%
\special{pa 3470 1303}%
\special{fp}%
\special{pa 3470 1303}%
\special{pa 2949 1600}%
\special{fp}%
\special{pa 3464 1309}%
\special{pa 3985 1605}%
\special{fp}%
% STR 2 0 3 0
% 3 330 314 330 400 2 0
% 1
\put(23.4735,-28.4528){\makebox(0,0)[lb]{1}}%
% STR 2 0 3 0
% 3 958 296 958 383 2 0
% 2
\put(68.1444,-27.2435){\makebox(0,0)[lb]{2}}%
% STR 2 0 3 0
% 3 1645 351 1645 438 2 0
% 3
\put(117.0120,-31.1558){\makebox(0,0)[lb]{3}}%
% STR 2 0 3 0
% 3 530 1003 530 1090 2 0
% 4
\put(37.6999,-77.5338){\makebox(0,0)[lb]{4}}%
% STR 2 0 3 0
% 3 1417 1017 1417 1103 2 0
% 5
\put(100.7939,-78.4585){\makebox(0,0)[lb]{5}}%
% STR 2 0 3 0
% 3 292 1755 292 1841 2 0
% 6
\put(20.7705,-130.9538){\makebox(0,0)[lb]{6}}%
% STR 2 0 3 0
% 3 980 1783 980 1871 2 0
% 7
\put(69.7093,-133.0878){\makebox(0,0)[lb]{7}}%
% STR 2 0 3 0
% 3 1612 1716 1612 1804 2 0
% 8
\put(114.6646,-128.3219){\makebox(0,0)[lb]{8}}%
\end{picture}%
\end{figure*}
 \begin{figure*}[!ht]
\centering
%WinTpicVersion3.08
\unitlength 1pt
\begin{picture}(306.7918,132.3053)(  9.9585,-141.1968)
% LINE 1 0 3 0
% 10 965 246 965 795 965 795 620 1202 620 1202 828 1655 613 1209 398 1679 958 804 1305 1209
% 
\special{pn 13}%
\special{pa 950 243}%
\special{pa 950 783}%
\special{fp}%
\special{pa 950 783}%
\special{pa 611 1184}%
\special{fp}%
\special{pa 611 1184}%
\special{pa 815 1629}%
\special{fp}%
\special{pa 604 1190}%
\special{pa 392 1653}%
\special{fp}%
\special{pa 943 792}%
\special{pa 1285 1190}%
\special{fp}%
% LINE 1 0 3 0
% 10 965 246 965 795 965 795 620 1202 620 1202 828 1655 613 1209 398 1679 958 804 1305 1209
% 
\special{pn 13}%
\special{pa 950 243}%
\special{pa 950 783}%
\special{fp}%
\special{pa 950 783}%
\special{pa 611 1184}%
\special{fp}%
\special{pa 611 1184}%
\special{pa 815 1629}%
\special{fp}%
\special{pa 604 1190}%
\special{pa 392 1653}%
\special{fp}%
\special{pa 943 792}%
\special{pa 1285 1190}%
\special{fp}%
% STR 2 0 3 0
% 3 767 255 767 354 2 0
% $V_1$
\put(54.5582,-25.1807){\makebox(0,0)[lb]{$V_1$}}%
% STR 2 0 3 0
% 3 971 684 971 783 2 0
% $F_2$
\put(69.0691,-55.6963){\makebox(0,0)[lb]{$F_2$}}%
% STR 2 0 3 0
% 3 377 1135 377 1235 2 0
% $F_4$
\put(26.8167,-87.8479){\makebox(0,0)[lb]{$F_4$}}%
% STR 2 0 3 0
% 3 1180 1309 1180 1410 2 0
% $F_3$
\put(83.9356,-100.2960){\makebox(0,0)[lb]{$F_3$}}%
% STR 2 0 3 0
% 3 188 1688 188 1789 2 0
% $F_5$
\put(13.3728,-127.2550){\makebox(0,0)[lb]{$F_5$}}%
% STR 2 0 3 0
% 3 639 1679 639 1778 2 0
% $F_6$
\put(45.4533,-126.4725){\makebox(0,0)[lb]{$F_6$}}%
% STR 2 0 3 0
% 3 890 1961 890 2060 5 0
% Graph $G^1(\Delta)$
\put(63.3074,-146.5317){\makebox(0,0){Graph $G^1(\Delta)$}}%
% STR 2 0 3 0
% 3 2221 226 2221 324 2 0
% $V_2$
\put(157.9839,-23.0467){\makebox(0,0)[lb]{$V_2$}}%
% STR 2 0 3 0
% 3 2426 864 2426 964 2 0
% $F_4$
\put(172.5660,-68.5711){\makebox(0,0)[lb]{$F_4$}}%
% STR 2 0 3 0
% 3 1850 1570 1850 1670 2 0
% $F_5$
\put(131.5940,-118.7903){\makebox(0,0)[lb]{$F_5$}}%
% STR 2 0 3 0
% 3 2590 1569 2590 1670 2 0
% $F_6$
\put(184.2316,-118.7903){\makebox(0,0)[lb]{$F_6$}}%
% STR 2 0 3 0
% 3 2430 1981 2430 2080 5 0
% Graph $G^2(\Delta)$
\put(172.8505,-147.9543){\makebox(0,0){Graph $G^2(\Delta)$}}%
% STR 2 0 3 0
% 3 3698 216 3698 315 2 0
% $V_3$
\put(263.0457,-22.4065){\makebox(0,0)[lb]{$V_3$}}%
% STR 2 0 3 0
% 3 3910 729 3910 828 2 0
% $F_2$
\put(278.1257,-58.8972){\makebox(0,0)[lb]{$F_2$}}%
% STR 2 0 3 0
% 3 3336 1346 3336 1446 2 0
% $F_1$
\put(237.2960,-102.8567){\makebox(0,0)[lb]{$F_1$}}%
% STR 2 0 3 0
% 3 4267 1166 4267 1265 2 0
% $F_4$
\put(303.5198,-89.9818){\makebox(0,0)[lb]{$F_4$}}%
% STR 2 0 3 0
% 3 3780 1800 3780 1900 2 0
% $F_5$
\put(268.8785,-135.1506){\makebox(0,0)[lb]{$F_5$}}%
% STR 2 0 3 0
% 3 4300 1801 4300 1900 2 0
% $F_6$
\put(305.8671,-135.1506){\makebox(0,0)[lb]{$F_6$}}%
% STR 2 0 3 0
% 3 3390 2071 3390 2170 2 0
% Graph $G^3(\Delta)$
\put(241.1371,-154.3562){\makebox(0,0)[lb]{Graph $G^3(\Delta)$}}%
% LINE 1 0 3 0
% 6 2405 246 2414 932 2414 932 2037 1479 2405 932 2775 1479
% 
\special{pn 13}%
\special{pa 2368 243}%
\special{pa 2376 918}%
\special{fp}%
\special{pa 2376 918}%
\special{pa 2005 1456}%
\special{fp}%
\special{pa 2368 918}%
\special{pa 2732 1456}%
\special{fp}%
% LINE 1 0 3 0
% 10 3892 246 3892 833 3892 833 3585 1262 3892 849 4259 1262 4259 1262 4018 1735 4252 1271 4453 1726
% 
\special{pn 13}%
\special{pa 3831 243}%
\special{pa 3831 820}%
\special{fp}%
\special{pa 3831 820}%
\special{pa 3529 1243}%
\special{fp}%
\special{pa 3831 836}%
\special{pa 4192 1243}%
\special{fp}%
\special{pa 4192 1243}%
\special{pa 3955 1708}%
\special{fp}%
\special{pa 4186 1251}%
\special{pa 4383 1699}%
\special{fp}%
\end{picture}%
\end{figure*}
 \begin{figure*}[!ht]
\centering
%WinTpicVersion3.08
\unitlength 1pt
\begin{picture}(329.1273,224.3500)( 10.6698,-235.8022)
% LINE 1 0 3 0
% 10 1099 360 1099 863 1099 863 686 1232 686 1232 933 1647 678 1239 421 1668 1090 870 1504 1239
% 
\special{pn 13}%
\special{pa 1082 355}%
\special{pa 1082 850}%
\special{fp}%
\special{pa 1082 850}%
\special{pa 676 1213}%
\special{fp}%
\special{pa 676 1213}%
\special{pa 919 1622}%
\special{fp}%
\special{pa 668 1220}%
\special{pa 415 1642}%
\special{fp}%
\special{pa 1073 857}%
\special{pa 1481 1220}%
\special{fp}%
% LINE 1 0 3 0
% 10 1099 360 1099 863 1099 863 686 1232 686 1232 933 1647 678 1239 421 1668 1090 870 1504 1239
% 
\special{pn 13}%
\special{pa 1082 355}%
\special{pa 1082 850}%
\special{fp}%
\special{pa 1082 850}%
\special{pa 676 1213}%
\special{fp}%
\special{pa 676 1213}%
\special{pa 919 1622}%
\special{fp}%
\special{pa 668 1220}%
\special{pa 415 1642}%
\special{fp}%
\special{pa 1073 857}%
\special{pa 1481 1220}%
\special{fp}%
% STR 2 0 3 0
% 3 880 264 880 351 2 0
% $V_6$
\put(62.5961,-24.9673){\makebox(0,0)[lb]{$V_6$}}%
% STR 2 0 3 0
% 3 1118 816 1118 904 2 0
% $F_4$
\put(79.5255,-64.3032){\makebox(0,0)[lb]{$F_4$}}%
% STR 2 0 3 0
% 3 364 1218 364 1306 2 0
% $F_2$
\put(25.8920,-92.8982){\makebox(0,0)[lb]{$F_2$}}%
% STR 2 0 3 0
% 3 1299 1290 1299 1379 2 0
% $F_6$
\put(92.4003,-98.0909){\makebox(0,0)[lb]{$F_6$}}%
% STR 2 0 3 0
% 3 256 1720 256 1809 2 0
% $F_1$
\put(18.2098,-128.6776){\makebox(0,0)[lb]{$F_1$}}%
% STR 2 0 3 0
% 3 730 1686 730 1773 2 0
% $F_3$
\put(51.9263,-126.1168){\makebox(0,0)[lb]{$F_3$}}%
% STR 2 0 3 0
% 3 900 1973 900 2060 5 0
% Graph $G^6(\Delta)$
\put(64.0187,-146.5317){\makebox(0,0){Graph $G^6(\Delta)$}}%
% STR 2 0 3 0
% 3 2270 273 2270 360 2 0
% $V_7$
\put(161.4694,-25.6075){\makebox(0,0)[lb]{$V_7$}}%
% STR 2 0 3 0
% 3 2491 925 2491 1014 2 0
% $F_2$
\put(177.1895,-72.1277){\makebox(0,0)[lb]{$F_2$}}%
% STR 2 0 3 0
% 3 1883 1519 1883 1607 2 0
% $F_1$
\put(133.9413,-114.3089){\makebox(0,0)[lb]{$F_1$}}%
% STR 2 0 3 0
% 3 2650 1561 2650 1650 2 0
% $F_3$
\put(188.4995,-117.3676){\makebox(0,0)[lb]{$F_3$}}%
% STR 2 0 3 0
% 3 2460 1801 2460 1890 5 0
% Graph $G^7(\Delta)$
\put(174.9844,-134.4393){\makebox(0,0){Graph $G^7(\Delta)$}}%
% STR 2 0 3 0
% 3 3817 299 3817 387 2 0
% $V_8$
\put(271.5104,-27.5280){\makebox(0,0)[lb]{$V_8$}}%
% STR 2 0 3 0
% 3 4049 842 4049 928 2 0
% $F_4$
\put(288.0130,-66.0104){\makebox(0,0)[lb]{$F_4$}}%
% STR 2 0 3 0
% 3 3447 1325 3447 1413 2 0
% $F_5$
\put(245.1916,-100.5094){\makebox(0,0)[lb]{$F_5$}}%
% STR 2 0 3 0
% 3 4500 1194 4500 1270 2 0
% $F_2$
\put(320.0935,-90.3375){\makebox(0,0)[lb]{$F_2$}}%
% STR 2 0 3 0
% 3 4014 1765 4014 1851 2 0
% $F_1$
\put(285.5234,-131.6651){\makebox(0,0)[lb]{$F_1$}}%
% STR 2 0 3 0
% 3 4640 1772 4640 1860 2 0
% $F_3$
\put(330.0520,-132.3053){\makebox(0,0)[lb]{$F_3$}}%
% STR 2 0 3 0
% 3 4040 1991 4040 2080 5 0
% Graph $G^8(\Delta)$
\put(287.3728,-147.9543){\makebox(0,0){Graph $G^8(\Delta)$}}%
% LINE 1 0 3 0
% 6 2459 351 2467 958 2467 958 2095 1440 2459 958 2823 1440
% 
\special{pn 13}%
\special{pa 2421 346}%
\special{pa 2429 943}%
\special{fp}%
\special{pa 2429 943}%
\special{pa 2063 1418}%
\special{fp}%
\special{pa 2421 943}%
\special{pa 2779 1418}%
\special{fp}%
% LINE 1 0 3 0
% 10 4045 360 4045 897 4045 897 3644 1289 4045 911 4524 1289 4524 1289 4211 1720 4515 1297 4777 1713
% 
\special{pn 13}%
\special{pa 3982 355}%
\special{pa 3982 883}%
\special{fp}%
\special{pa 3982 883}%
\special{pa 3587 1269}%
\special{fp}%
\special{pa 3982 897}%
\special{pa 4453 1269}%
\special{fp}%
\special{pa 4453 1269}%
\special{pa 4145 1693}%
\special{fp}%
\special{pa 4444 1277}%
\special{pa 4702 1687}%
\special{fp}%
% LINE 1 0 3 0
% 4 1632 2610 1309 3049 1632 2619 1964 3049
% 
\special{pn 13}%
\special{pa 1607 2569}%
\special{pa 1289 3001}%
\special{fp}%
\special{pa 1607 2578}%
\special{pa 1934 3001}%
\special{fp}%
% STR 2 0 3 0
% 3 1602 2522 1602 2610 2 0
% $V_4$
\put(113.9533,-185.6542){\makebox(0,0)[lb]{$V_4$}}%
% STR 2 0 3 0
% 3 1090 3142 1090 3230 2 0
% $F_3$
\put(77.5338,-229.7560){\makebox(0,0)[lb]{$F_3$}}%
% STR 2 0 3 0
% 3 1820 3133 1820 3220 2 0
% $F_6$
\put(129.4600,-229.0447){\makebox(0,0)[lb]{$F_6$}}%
% STR 2 0 3 0
% 3 1590 3323 1590 3410 5 0
% Graph $G^4(\Delta)$
\put(113.0997,-242.5597){\makebox(0,0){Graph $G^4(\Delta)$}}%
% LINE 1 0 3 0
% 4 3432 2628 3093 3031 3432 2645 3803 3013
% 
\special{pn 13}%
\special{pa 3378 2587}%
\special{pa 3045 2984}%
\special{fp}%
\special{pa 3378 2604}%
\special{pa 3744 2966}%
\special{fp}%
% STR 2 0 3 0
% 3 3326 2522 3326 2610 2 0
% $V_5$
\put(236.5847,-185.6542){\makebox(0,0)[lb]{$V_5$}}%
% STR 2 0 3 0
% 3 2850 3112 2850 3200 2 0
% $F_1$
\put(202.7259,-227.6220){\makebox(0,0)[lb]{$F_1$}}%
% STR 2 0 3 0
% 3 3690 3112 3690 3200 2 0
% $F_5$
\put(262.4767,-227.6220){\makebox(0,0)[lb]{$F_5$}}%
% STR 2 0 3 0
% 3 3470 3312 3470 3400 5 0
% Graph $G^5(\Delta)$
\put(246.8277,-241.8484){\makebox(0,0){Graph $G^5(\Delta)$}}%
\end{picture}%

\end{figure*}

\end{example}

\section{The Cohen-Macaulayness for a simplicial complex without cycles in codimension 1}
Throughout this section, $\Delta$ will be a $(d-1)$-dimensional Cohen-Macaulay complex without cycles in codimension 1. Moreover the set of its facets will be $\Im (\Delta)=\{F_1,\dots,F_m\}$. The Stanley-Reisner ideal of $\Delta$ is:
$$ I_{\Delta}=\bigcap_{j=1}^m(x_i:\; v_i \notin F_j) .$$\\
For $i=1,\dots,n$, let  $\alpha_i=(\alpha_i(j): j\in \{1,\dots,m\} \text{ and } v_i \notin F_j)$ be positive integer vectors. Set 
$ Q_j=(x_i^{\alpha_i(j)}:v_i\notin F_j) $ for all $j=1,\dots,m$ and define the following ideal:
$$ I_{\Delta(\alpha)}=\bigcap_{j=1}^m Q_j. $$
Obviously, $Q_j$ is the $B_{F_j}$-primary component of $I_{\Delta(\alpha)}$ and $\sqrt{I_{\D(\alpha)}}=I_\D$.

For any vector $\textbf{a}=(a_1,\dots,a_n)\in \mathbb{N}^n$ denote by $\Delta(\alpha)_{\textbf{a}}$ the subcomplex of $\D$ with the set of facets 
$$\Im(\Delta(\alpha)_{\textbf{a}})=\{F_j\in \Im(\Delta)|a_i<\alpha_i(j) \text{ for all }i \text{ such that } v_i\notin F_j\}.$$ By \cite[Theorem 1.6]{MT}, we have:
\begin{theorem}
\label{MT}
$I_{\Delta(\alpha)}$ is Cohen-Macaulay if and only if $\Delta(\alpha)_{\textbf{a}}$ is a Cohen-Macaulay complex for all $\textbf{a}\in \mathbb{N}^n.$
\end{theorem}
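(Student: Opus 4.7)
The plan is to reduce the statement to a local cohomology calculation via Takayama's formula, which generalizes Hochster's formula to arbitrary monomial ideals. First I would recall the formula: for a monomial ideal $J\subset R$ with $\sqrt{J}=I_\Delta$ and for $\textbf{a}\in\mathbb{Z}^n$, setting $G_{\textbf{a}}=\{i:a_i<0\}$, there is an identification
$$\dim_k H^i_{\mathfrak{m}}(R/J)_{\textbf{a}}=\dim_k \tilde{H}_{i-|G_{\textbf{a}}|-1}(\Delta_{\textbf{a}}(J);k),$$
where $\Delta_{\textbf{a}}(J)$ is a subcomplex of $\lk_\Delta G_{\textbf{a}}$ read off from the primary decomposition of $J$: a facet $F$ of $\lk_\Delta G_{\textbf{a}}$ lies in $\Delta_{\textbf{a}}(J)$ precisely when no monomial of the $B_{F\cup G_{\textbf{a}}}$-primary component of $J$ divides $\textbf{x}^{\textbf{a}^+}$, where $\textbf{a}^+$ is the positive part of $\textbf{a}$.

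Next I would specialize this to $J=I_{\Delta(\alpha)}=\bigcap_j Q_j$. Because $Q_j=(x_i^{\alpha_i(j)}:v_i\notin F_j)$ is exactly the $B_{F_j}$-primary component, for $\textbf{a}\in\mathbb{N}^n$ (so $G_{\textbf{a}}=\emptyset$) one checks directly that $F_j\in\Delta_{\textbf{a}}(I_{\Delta(\alpha)})$ iff $\textbf{x}^{\textbf{a}}\notin Q_j$, i.e. iff $a_i<\alpha_i(j)$ for every $v_i\notin F_j$; thus $\Delta_{\textbf{a}}(I_{\Delta(\alpha)})=\Delta(\alpha)_{\textbf{a}}$. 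For general $\textbf{a}\in\mathbb{Z}^n$, the same computation identifies $\Delta_{\textbf{a}}(I_{\Delta(\alpha)})$ with a subcomplex of the form $\Delta'(\alpha')_{\textbf{a}^+}$, where $\Delta'=\lk_\Delta G_{\textbf{a}}$ and $\alpha'$ is the obvious restriction of $\alpha$ to the facets of $\Delta'$. Equivalently, these are precisely the links $\lk_{\Delta(\alpha)_{\textbf{a}^+}}(G_{\textbf{a}})$ when $G_{\textbf{a}}$ is a face of $\Delta(\alpha)_{\textbf{a}^+}$, and they are empty otherwise.

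Finally I would combine these ingredients. By the Auslander--Buchsbaum-style criterion for monomial ideals, $R/I_{\Delta(\alpha)}$ is Cohen-Macaulay iff $H^i_{\mathfrak{m}}(R/I_{\Delta(\alpha)})_{\textbf{a}}=0$ for all $i<d$ and all $\textbf{a}\in\mathbb{Z}^n$. Via Takayama's formula this is equivalent to the vanishing of $\tilde{H}_{j-1}(\lk_{\Delta(\alpha)_{\textbf{b}}}(G);k)$ for every $\textbf{b}\in\mathbb{N}^n$, every face $G$ of $\Delta(\alpha)_{\textbf{b}}$, and every $j<\dim\lk_{\Delta(\alpha)_{\textbf{b}}}(G)+1$. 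By Reisner's criterion this is exactly the statement that each $\Delta(\alpha)_{\textbf{b}}$ is Cohen-Macaulay.

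The principal obstacle is the bookkeeping in the second step: one has to match the subcomplexes produced by Takayama's formula at vectors $\textbf{a}$ with negative coordinates to the links of the subcomplexes $\Delta(\alpha)_{\textbf{b}}$ for $\textbf{b}\in\mathbb{N}^n$, and verify that the resulting family of links, as $\textbf{a}$ and $G_{\textbf{a}}$ vary, is rich enough to invoke Reisner's criterion on every $\Delta(\alpha)_{\textbf{b}}$ separately.
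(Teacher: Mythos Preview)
The paper does not give its own proof of this theorem: it is quoted verbatim from Minh and Trung \cite[Theorem~1.6]{MT}, and the authors use it as a black box. Your outline via Takayama's formula and Reisner's criterion is exactly the argument in the cited reference, and the steps you list are correct; in particular, the identification of the Takayama complex at $\textbf{a}\in\mathbb{Z}^n$ with $\lk_{\Delta(\alpha)_{\textbf{a}^+}}(G_{\textbf{a}})$ goes through because for a facet $F_j\supseteq G_{\textbf{a}}$ the membership condition $F_j\in\Im(\Delta(\alpha)_{\textbf{b}})$ is insensitive to the coordinates $b_i$ with $i\in G_{\textbf{a}}$, so every link needed for Reisner already appears among the Takayama complexes. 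The only point worth tightening in your bookkeeping is the case where $G_{\textbf{a}}$ is not a face of $\Delta(\alpha)_{\textbf{a}^+}$ (or of $\Delta$): there the Takayama complex is the void complex and all reduced homology vanishes, so no extra condition arises.
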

Albeit Theorem \ref{MT} gives necessary and sufficient conditions for $I_{\D(\alpha)}$ to be Cohen-Macaulay, we would like to give a simpler characterization on the numbers $\alpha_i(j)$. By some experiments with CoCoA \cite{CT} on some concrete examples, we came to the followings:
\begin{definition}
Let $G$ be a tree. For any vertex $v$  of $G$, we consider the directed graph $(G,v)$ as follow:
\begin{itemize}
\item [-] The set of vertices is $V((G,v))=V(G)$.
\item [-] The pair $(u_2,u_1)\in E((G,v))$ iff there is a path $v,u_k,\dots,u_2,u_1$ in $G$.  We will call it \textit{ a directed edge } of $(G,v)$.
\end{itemize}
\end{definition}
By Lemma \ref{Gi}, $G^i(\Delta)$ is a tree for all $i=1,\dots,n.$ We have the following definition:
\begin{definition}
\label{df3}
A vector $\alpha_i=(\alpha_i(j): v_i \notin F_j)$ is called $G^i(\Delta)$-\textit{satisfying} if $\alpha_i(h)\geq \alpha_i(k)$ for all directed edges $(F_h,F_k)$ of $(G^i(\Delta),V_i).$ Moreover, $\alpha=(\alpha_i(j))$ is called \textit{$\Delta$-satisfying} if $\alpha_i$ is $G^i(\Delta)$-satisfying for all $i=1,\dots,n.$
\end{definition}
\begin{lemma}
\label{lm3}
Let $F_1,\dots,F_m$ be a shelling of  $\Delta$. If $\alpha$ is $\Delta$-satisfying, then there exists $i\in \{1,\ldots ,n\}$ and a positive integer $s$ such that
$$\bigcap_{j=1}^{m-1} Q_j+Q_m=(x_i^s)+Q_m.$$
\end{lemma}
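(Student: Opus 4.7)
The plan is to exploit the tree structure of $G(\Delta)$ to pinpoint the variable $x_i$. By Lemma \ref{fvo}, $F_m$ is a free vertex of $G(\Delta)$, so it has a unique $G(\Delta)$-neighbor $F_k$ (with $k<m$); I will set $\{v_i\}:=F_m\setminus F_k$ and $s:=\alpha_i(k)$.

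A preliminary observation is that $F_m$ is the only facet of $\Delta$ containing $v_i$. Indeed, for any $j\neq m$, since $F_m$ is a leaf of $G(\Delta)$ every strongly connected sequence from $F_j$ to $F_m$ must terminate with $\ldots,F_k,F_m$; applying Lemma \ref{CMT} to the reversed sequence gives $F_m\cap F_j\subseteq F_m\cap F_k$, so $v_i\notin F_j$. Consequently $\alpha_i(j)$ is defined for every $j<m$, and the shape of $G^i(\Delta)$ near $V_i$ is very constrained: the unique facet containing $v_i$ is $F_m$, and its only $G(\Delta)$-neighbor is $F_k$, so $V_i$ is adjacent only to $F_k$ in $G^i(\Delta)$. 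Thus in the rooted tree $(G^i(\Delta),V_i)$ every facet $F_j$ with $j<m$ is a descendant of $F_k$, and the $\Delta$-satisfying hypothesis forces $\alpha_i(k)\geq \alpha_i(j)$. Hence $x_i^s\in Q_j$ for every $j<m$, and the inclusion $(x_i^s)+Q_m\subseteq \bigcap_{j=1}^{m-1}Q_j+Q_m$ follows immediately.

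For the reverse inclusion, since both sides are monomial ideals it suffices to show that any monomial $u\in \bigcap_{j=1}^{m-1}Q_j$ with $u\notin Q_m$ is divisible by $x_i^s$. From $u\in Q_k=(x_l^{\alpha_l(k)}:v_l\notin F_k)$ one extracts a variable $x_l$ with $v_l\notin F_k$ such that $x_l^{\alpha_l(k)}\mid u$, and the main step is to rule out $l\neq i$. Assume for contradiction $l\neq i$. Since $F_m\setminus F_k=\{v_i\}$, the conditions $v_l\notin F_k$ and $l\neq i$ force $v_l\notin F_m$, so both $F_k$ and $F_m$ are vertices of $G^l(\Delta)$, joined by an edge. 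The same analysis as above shows that $F_m$ is a leaf of $G^l(\Delta)$ whose unique neighbor is $F_k$, and that $V_l$ is not adjacent to $F_m$ (the only facet meeting $F_m$ in codimension one is $F_k$, and $v_l\notin F_k$). Therefore in $(G^l(\Delta),V_l)$ the facet $F_m$ is a descendant of $F_k$, and the $\Delta$-satisfying hypothesis yields $\alpha_l(k)\geq \alpha_l(m)$. Combined with $x_l^{\alpha_l(k)}\mid u$, this gives $x_l^{\alpha_l(m)}\mid u$, contradicting $u\notin Q_m$. Hence $l=i$, so $x_i^s\mid u$, and the lemma holds with $s=\alpha_i(k)$.
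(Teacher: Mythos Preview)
Your proof is correct and follows essentially the same approach as the paper's: both identify the unique $G(\Delta)$-neighbor $F_k$ of the leaf $F_m$, set $\{v_i\}=F_m\setminus F_k$ and $s=\alpha_i(k)$, and use the two consequences of $\Delta$-satisfying, namely $\alpha_i(k)\geq\alpha_i(j)$ for all $j<m$ and $\alpha_l(k)\geq\alpha_l(m)$ for every $v_l\notin F_k\cup F_m$. The only cosmetic difference is that the paper packages the reverse inclusion via the distributive law $\bigcap_j Q_j + Q_m=\bigcap_j(Q_j+Q_m)$ for monomial ideals and the identity $Q_k+Q_m=(x_i^s)+Q_m$, whereas you argue directly with a monomial $u$; the underlying logic is identical.
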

\begin{proof}
By Lemma \ref{fvo}, $F_m$ is a free vertex of $G(\Delta)$. We can assume  $F_m=\{v_1,\dots,v_d\}$ and there exists a facet $F_h=\{v_2,\dots,v_{d+1}\}$ with $F_j\cap F_m\subsetneq F_h\cap F_m$ for all $j\not=h,m$, see Lemma \ref{CMT}.  So $F_j\cap F_m$ is a proper subset of $\{v_2,\dots,v_d\}$ for all $j\not=h,m.$ Notice that for each $ i>d+1$, the pair $(F_h,F_m)$ is a directed edge of $(G^i(\D),V_i)$. Then, because $\alpha$ is $\D$-satisfying, we have $Q_h+Q_m=(x_1^{\alpha_1(h)})+Q_m$. Moreover, $\alpha_1(h)\geq \alpha_1(j)$ for all $j\not=h,m$, since $\alpha_1$ is $G^1(\Delta)$-satisfying. Hence $(x_1^{\alpha_1(h)})\subset Q_j$ for all $j\not=h,m.$ So $Q_j+Q_m\supset Q_h+Q_m$ for all $j\not=h,m.$ We have:
$$ \bigcap_{j=1}^{m-1} Q_j+Q_m= \bigcap_{j=1}^{m-1} (Q_j+Q_m)\supseteq (Q_h\cap Q_m)=(x_1^{\alpha_1(h)})+Q_m\supseteq \bigcap_{j=1}^{m-1} Q_j+Q_m. $$
So the Lemma is proved.
\end{proof}
\begin{theorem}
\label{thm1}
 Let $\Delta$ be a Cohen-Macaulay complex without cycles in codimension 1.  Then $I_{\Delta(\alpha)}$ is Cohen-Macaulay   if and only if   $\alpha$ is $\Delta$-satisfying.
\end{theorem}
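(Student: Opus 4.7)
The plan is to prove the two implications separately, exploiting the tree structures from Definition \ref{df1} and Lemma \ref{Gi}.

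For the sufficiency (``$\Leftarrow$''), I would induct on the number of facets $m = |\Im(\Delta)|$; the base case $m=1$ is trivial, since $I_{\Delta(\alpha)} = Q_1$ is already a complete intersection. For the inductive step, fix a shelling $F_1, \dots, F_m$, so that by Lemma \ref{fvo} the facet $F_m$ is a free vertex of the tree $G(\Delta)$, and hence $\Delta' = \langle F_1, \dots, F_{m-1} \rangle$ is again a Cohen-Macaulay complex without cycles in codimension $1$. Because $G^i(\Delta')$ is a subgraph of $G^i(\Delta)$ for every $i$, any directed edge of $(G^i(\Delta'), V_i)$ is a directed edge of $(G^i(\Delta), V_i)$, so $\alpha$ restricted to $\Delta'$ is $\Delta'$-satisfying; by induction $R/\bigcap_{j=1}^{m-1} Q_j$ is Cohen-Macaulay of dimension $d$. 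The main computation is the Mayer--Vietoris short exact sequence
$$
0 \to R\Big/\bigcap_{j=1}^m Q_j \to R\Big/\bigcap_{j=1}^{m-1} Q_j \;\oplus\; R/Q_m \to R\Big/\Big(\bigcap_{j=1}^{m-1} Q_j + Q_m\Big) \to 0.
$$
By Lemma \ref{lm3} the rightmost term equals $R/((x_1^s) + Q_m)$ for some $s$ and some $v_1 \in F_m$; since $v_1 \in F_m$ we are adjoining a pure power of a variable not occurring in $Q_m$ to the complete intersection $Q_m$, and the resulting quotient is Cohen-Macaulay of dimension $d-1$. Both summands of the middle term are Cohen-Macaulay of dimension $d$, so the depth lemma yields $\depth(R/I_{\Delta(\alpha)}) \geq d$, and equality then follows from $\dim R/I_{\Delta(\alpha)} = d$.

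For the converse (``$\Rightarrow$''), I argue the contrapositive using Theorem \ref{MT}. Suppose that $(F_h, F_k)$ is a directed edge of $(G^i(\Delta), V_i)$ with $\alpha_i(h) < \alpha_i(k)$, and define $\mathbf{a} \in \mathbb{N}^n$ by $a_i = \alpha_i(h)$ and $a_j = 0$ for $j \neq i$. A direct check shows that $\Im(\Delta(\alpha)_\mathbf{a})$ consists of every facet containing $v_i$, together with every facet $F_\ell \not\ni v_i$ satisfying $\alpha_i(\ell) > \alpha_i(h)$; in particular $F_k$ is a facet of $\Delta(\alpha)_\mathbf{a}$ while $F_h$ is not. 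Because $G^i(\Delta)$ is a tree (Lemma \ref{Gi}), the $v_i$-containing facets form a subtree $T_i$ of $G(\Delta)$ that is contracted to $V_i$, and the unique path in $G(\Delta)$ from $F_k$ into $T_i$ is the lift of the unique $G^i(\Delta)$-path from $F_k$ to $V_i$; in particular, it traverses $F_h$. Removing $F_h$ therefore disconnects $F_k$ from every facet of $T_i$ inside the induced facet subgraph $G(\Delta(\alpha)_\mathbf{a})$, so the pure complex $\Delta(\alpha)_\mathbf{a}$ is not strongly connected and hence not Cohen-Macaulay, contradicting Theorem \ref{MT}.

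The main obstacle I anticipate is the combinatorial lifting in the converse: making precise the identification between the unique $G(\Delta)$-path from $F_k$ into the subtree $T_i$ and the lift of the $G^i(\Delta)$-path from $F_k$ to $V_i$. This hinges on the $v_i$-containing facets forming a connected subgraph of $G(\Delta)$, which is implicit in the tree structure of $G^i(\Delta)$. In the sufficiency direction the only subtle point is verifying that $\alpha$ remains $\Delta'$-satisfying after deleting $F_m$, and this is handled by the easy subgraph inclusion $G^i(\Delta') \subseteq G^i(\Delta)$.
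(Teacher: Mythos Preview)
Your proposal is correct and follows essentially the same route as the paper's proof: the same induction on $m$ with the Mayer--Vietoris sequence and Lemma~\ref{lm3} for sufficiency, and the same choice of $\mathbf{a}$ combined with Theorem~\ref{MT} for necessity. Your write-up is in fact more explicit in the converse direction, where the paper simply asserts that $\Delta(\alpha)_{\mathbf a}$ is not strongly connected; your argument via the subtree $T_i$ and the unique path through $F_h$ (which works because Lemma~\ref{CMT} forces the $v_i$-containing facets to be contiguous in the tree $G(\Delta)$) supplies exactly the justification the paper omits.
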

 \begin{proof} We choose a shelling $F_1,\dots,F_m$ of $\Delta$.  We denote by $\Delta_j$ the simplicial complex with the set of facets $\Im(\Delta_j)=\{F_1,\dots,F_j\}$ and $I_{\Delta_j(\alpha)}$ the ideal $\bigcap_{t=1}^jQ_t.$ We will prove the theorem by induction on $m$. This is obvious for $m=1.$ We assume that the assertion is true for $j=1,\dots,m-1.$ By Lemma \ref{fvo} we have $F_m$ is a free vertex of $G(\Delta)$. So $F_m$ is a free vertex of $G^i(\Delta)$ for all $i=1,\dots,n$ whenever $F_m$ is a vertex of $G^i(\D)$. 
If $\alpha_i$ is $G^i(\Delta)$-satisfying for all $i=1,\dots ,n$, then $(\alpha_i)_{|\Delta_{m-1}}$ is $G^i(\Delta_{m-1})$-satisfying for all $i=1,\dots ,n.$ By induction, we have $R/I_{\Delta_{k}(\alpha)}$ are $d$-dimensional Cohen-Macaulay rings  for all $k=1,\dots,m-1.$
We have the following exact sequence: 
\begin{equation}
\label{es1}
0 \rightarrow  R/I_{\Delta_{m}(\alpha)}\xrightarrow{f}  R/I_{\Delta_{m-1}(\alpha)}\oplus R/Q_{m}\xrightarrow{g} R/(I_{\Delta_{m-1}(\alpha)}+Q_{m})\rightarrow 0.
\end{equation}
%with $f(a+I_{\Delta_{m}(\alpha)})=(a+I_{\Delta_{m-1}(\alpha)},-a+Q_{m}),$ and $g(a+I_{\Delta_{m-1}(\alpha)},b+Q_{m})=a+b+I_{\Delta_{m-1}(\alpha)}+Q_{m}.$ 
By using Lemma \ref{lm3} we have  $R/(I_{\Delta_{m-1}(\alpha)}+Q_m)$ is a $(d-1)$-dimensional Cohen-Macaulay ring.
Because $R/I_{\Delta_{m-1}(\alpha)}$ and $ R/Q_m$ are Cohen-Macaulay rings of dimension $d$, we have that $R/I_{\Delta_m(\alpha)}$ is $d$-dimensional Cohen-Macaulay ring by \cite[Proposition 1.2.9]{BH}.

Conversely,  if there exists an index $i$ such that $\alpha_i$ is not $G^i(\Delta)$-satisfying, then there exists a directed edge $(F_h,F_k)$ in $G^i(\Delta)$ such that $\alpha_i(k)>\alpha_i(h).$ We choose the vector $\textbf{a}=(a_1,\dots,a_n)$ with 
$$a_t =
\begin{cases}
\alpha_i(h) & \text { if } t=i,\\
0& \text{ otherwise}.
\end{cases}$$
It turns out that if a facet $F$ of $\Delta$ contains the vertex $v_i$, then $F\in \Im(\Delta(\alpha)_{\textbf{a}})$. Moreover, $F_k\in \Im(\Delta(\alpha)_{\textbf{a}}) $ and $F_h\notin \Im(\Delta(\alpha)_{\textbf{a}})$. So, $\Delta(\alpha)_{\textbf{a}}$ is not strongly connected. Hence, $\Delta(\alpha)_{\textbf{a}}$ is not Cohen-Macaulay. This is a contradiction with Theorem \ref{MT}.
\end{proof}
\begin{example}
\label{ex2}
Let $\Delta$ be the simplicial complex of Example \ref{ex1}. 
{\small $$ I_{\Delta}=(x_3,x_5,x_6,x_7,x_8)\cap(x_1,x_3,x_6,x_7,x_8)\cap(x_1,x_4,x_6,x_7,x_8) $$
$$\cap(x_1,x_2,x_3,x_6,x_8) \cap(x_1,x_2,x_3,x_5,x_8)\cap(x_1,x_2,x_3,x_4,x_6).$$}
The ideal $I_{\Delta(\alpha)}$ is:
{\small $$ (x_3^{\alpha_3(1)},x_5^{\alpha_5(1)},x_6^{\alpha_6(1)},x_7^{\alpha_7(1)},x_8^{\alpha_8(1)})\cap(x_1^{\alpha_1(2)},x_3^{\alpha_3(2)},x_6^{\alpha_6(2)},x_7^{\alpha_7(2)},x_8^{\alpha_8(2)})$$  
$$\cap(x_1^{\alpha_1(3)},x_4^{\alpha_4(3)},x_6^{\alpha_6(3)},x_7^{\alpha_7(3)},x_8^{\alpha_8(3)}) \cap(x_1^{\alpha_1(4)},x_2^{\alpha_2(4)},x_3^{\alpha_3(4)},x_6^{\alpha_6(4)},x_8^{\alpha_8(4)})$$ 
 $$  \cap(x_1^{\alpha_1(5)},x_2^{\alpha_2(5)},x_3^{\alpha_3(5)},x_5^{\alpha_5(5)},x_8^{\alpha_8(5)})\cap(x_1^{\alpha_1(6)},x_2^{\alpha_2(6)},x_3^{\alpha_3(6)},x_4^{\alpha_4(6)},x_6^{\alpha_6(6)}) . $$}
 
\noindent Theorem \ref{thm1} tells us that $I_{\Delta(\alpha)}$ is Cohen-Macaulay if and only if  $\alpha_4(3)$, $\alpha_4(6)$, $\alpha_5(1)$ and $\alpha_5(5)$ are arbitrary positive integers and $\alpha_i(j)$ are positive integers which satisfy the order as in the following figure:
\vskip 2mm
\begin{figure*}[!ht]
%WinTpicVersion3.08
\unitlength 1pt
\begin{picture}(299.0385,106.6978)(  0.0000,-112.3884)
% VECTOR 1 0 3 0
% 8 1302 800 1560 1141 1299 816 1032 1141 1032 1141 1228 1561 1038 1137 849 1580
% 
\special{pn 13}%
\special{pa 1282 788}%
\special{pa 1536 1124}%
\special{fp}%
\special{sh 1}%
\special{pa 1536 1124}%
\special{pa 1512 1060}%
\special{pa 1504 1081}%
\special{pa 1481 1083}%
\special{pa 1536 1124}%
\special{fp}%
\special{pa 1279 804}%
\special{pa 1016 1124}%
\special{fp}%
\special{sh 1}%
\special{pa 1016 1124}%
\special{pa 1073 1085}%
\special{pa 1050 1083}%
\special{pa 1043 1061}%
\special{pa 1016 1124}%
\special{fp}%
\special{pa 1016 1124}%
\special{pa 1209 1537}%
\special{fp}%
\special{sh 1}%
\special{pa 1209 1537}%
\special{pa 1199 1469}%
\special{pa 1187 1490}%
\special{pa 1164 1486}%
\special{pa 1209 1537}%
\special{fp}%
\special{pa 1022 1120}%
\special{pa 836 1556}%
\special{fp}%
\special{sh 1}%
\special{pa 836 1556}%
\special{pa 880 1503}%
\special{pa 857 1507}%
\special{pa 844 1488}%
\special{pa 836 1556}%
\special{fp}%
% STR 2 0 3 0
% 3 1296 178 1296 259 5 0
% $V_1$
\put(92.1869,-18.4232){\makebox(0,0){$V_1$}}%
% STR 2 0 3 0
% 3 1327 719 1327 794 2 0
% $\alpha_1(2)$
\put(94.3920,-56.4787){\makebox(0,0)[lb]{$\alpha_1(2)$}}%
% STR 2 0 3 0
% 3 1010 1066 1010 1138 3 0
% $\alpha_1(4)$
\put(71.8432,-80.9481){\makebox(0,0)[rb]{$\alpha_1(4)$}}%
% STR 2 0 3 0
% 3 1533 1121 1533 1199 5 0
% $\alpha_1(3)$
\put(109.0452,-85.2871){\makebox(0,0){$\alpha_1(3)$}}%
% STR 2 0 3 0
% 3 1196 1548 1196 1626 5 0
% $\alpha_1(6)$
\put(85.0737,-115.6605){\makebox(0,0){$\alpha_1(6)$}}%
% STR 2 0 3 0
% 3 811 1561 811 1639 5 0
% $\alpha_1(5)$
\put(57.6880,-116.5852){\makebox(0,0){$\alpha_1(5)$}}%
% VECTOR 1 0 3 0
% 4 2434 836 2199 1213 2434 836 2718 1229
% 
\special{pn 13}%
\special{pa 2396 823}%
\special{pa 2165 1194}%
\special{fp}%
\special{sh 1}%
\special{pa 2165 1194}%
\special{pa 2216 1149}%
\special{pa 2192 1150}%
\special{pa 2183 1128}%
\special{pa 2165 1194}%
\special{fp}%
\special{pa 2396 823}%
\special{pa 2676 1210}%
\special{fp}%
\special{sh 1}%
\special{pa 2676 1210}%
\special{pa 2653 1145}%
\special{pa 2645 1168}%
\special{pa 2622 1169}%
\special{pa 2676 1210}%
\special{fp}%
% STR 2 0 3 0
% 3 2438 308 2438 381 5 0
% $V_2$
\put(173.4195,-27.1013){\makebox(0,0){$V_2$}}%
% VECTOR 1 0 3 0
% 4 1296 311 1307 812 2438 468 2434 836
% 
\special{pn 13}%
\special{pa 1276 307}%
\special{pa 1287 800}%
\special{fp}%
\special{sh 1}%
\special{pa 1287 800}%
\special{pa 1306 734}%
\special{pa 1286 748}%
\special{pa 1266 735}%
\special{pa 1287 800}%
\special{fp}%
\special{pa 2400 461}%
\special{pa 2396 823}%
\special{fp}%
\special{sh 1}%
\special{pa 2396 823}%
\special{pa 2417 758}%
\special{pa 2397 771}%
\special{pa 2377 757}%
\special{pa 2396 823}%
\special{fp}%
% STR 2 0 3 0
% 3 2456 752 2456 830 2 0
% $\alpha_2(4)$
\put(174.6999,-59.0395){\makebox(0,0)[lb]{$\alpha_2(4)$}}%
% STR 2 0 3 0
% 3 2171 1246 2171 1326 5 0
% $\alpha_2(5)$
\put(154.4273,-94.3209){\makebox(0,0){$\alpha_2(5)$}}%
% STR 2 0 3 0
% 3 2467 1342 2467 1421 2 0
% $\alpha_2(6)$
\put(175.4824,-101.0784){\makebox(0,0)[lb]{$\alpha_2(6)$}}%
% VECTOR 1 0 3 0
% 10 3631 201 3639 737 3639 737 3297 1142 3639 733 3979 1053 3979 1053 3721 1417 3962 1065 4204 1422
% 
\special{pn 13}%
\special{pa 3574 198}%
\special{pa 3582 726}%
\special{fp}%
\special{sh 1}%
\special{pa 3582 726}%
\special{pa 3601 660}%
\special{pa 3581 674}%
\special{pa 3562 661}%
\special{pa 3582 726}%
\special{fp}%
\special{pa 3582 726}%
\special{pa 3246 1125}%
\special{fp}%
\special{sh 1}%
\special{pa 3246 1125}%
\special{pa 3303 1087}%
\special{pa 3279 1084}%
\special{pa 3273 1062}%
\special{pa 3246 1125}%
\special{fp}%
\special{pa 3582 722}%
\special{pa 3917 1037}%
\special{fp}%
\special{sh 1}%
\special{pa 3917 1037}%
\special{pa 3882 978}%
\special{pa 3878 1001}%
\special{pa 3856 1006}%
\special{pa 3917 1037}%
\special{fp}%
\special{pa 3917 1037}%
\special{pa 3663 1395}%
\special{fp}%
\special{sh 1}%
\special{pa 3663 1395}%
\special{pa 3717 1353}%
\special{pa 3693 1352}%
\special{pa 3685 1330}%
\special{pa 3663 1395}%
\special{fp}%
\special{pa 3900 1049}%
\special{pa 4138 1400}%
\special{fp}%
\special{sh 1}%
\special{pa 4138 1400}%
\special{pa 4118 1335}%
\special{pa 4109 1357}%
\special{pa 4085 1357}%
\special{pa 4138 1400}%
\special{fp}%
% STR 2 0 3 0
% 3 3631 96 3631 175 5 0
% $V_3$
\put(258.2799,-12.4481){\makebox(0,0){$V_3$}}%
% STR 2 0 3 0
% 3 3305 1155 3305 1233 5 0
% $\alpha_3(1)$
\put(235.0909,-87.7056){\makebox(0,0){$\alpha_3(1)$}}%
% STR 2 0 3 0
% 3 4012 995 4012 1073 2 0
% $\alpha_3(4)$
\put(285.3811,-76.3245){\makebox(0,0)[lb]{$\alpha_3(4)$}}%
% STR 2 0 3 0
% 3 3702 1456 3702 1532 5 0
% $\alpha_3(5)$
\put(263.3303,-108.9741){\makebox(0,0){$\alpha_3(5)$}}%
% STR 2 0 3 0
% 3 4204 1436 4204 1515 5 0
% $\alpha_3(6)$
\put(299.0385,-107.7648){\makebox(0,0){$\alpha_3(6)$}}%
% VECTOR 1 0 3 0
% 10 3631 201 3639 737 3639 737 3297 1142 3639 733 3979 1053 3979 1053 3721 1417 3962 1065 4204 1422
% 
\special{pn 13}%
\special{pa 3574 198}%
\special{pa 3582 726}%
\special{fp}%
\special{sh 1}%
\special{pa 3582 726}%
\special{pa 3601 660}%
\special{pa 3581 674}%
\special{pa 3562 661}%
\special{pa 3582 726}%
\special{fp}%
\special{pa 3582 726}%
\special{pa 3246 1125}%
\special{fp}%
\special{sh 1}%
\special{pa 3246 1125}%
\special{pa 3303 1087}%
\special{pa 3279 1084}%
\special{pa 3273 1062}%
\special{pa 3246 1125}%
\special{fp}%
\special{pa 3582 722}%
\special{pa 3917 1037}%
\special{fp}%
\special{sh 1}%
\special{pa 3917 1037}%
\special{pa 3882 978}%
\special{pa 3878 1001}%
\special{pa 3856 1006}%
\special{pa 3917 1037}%
\special{fp}%
\special{pa 3917 1037}%
\special{pa 3663 1395}%
\special{fp}%
\special{sh 1}%
\special{pa 3663 1395}%
\special{pa 3717 1353}%
\special{pa 3693 1352}%
\special{pa 3685 1330}%
\special{pa 3663 1395}%
\special{fp}%
\special{pa 3900 1049}%
\special{pa 4138 1400}%
\special{fp}%
\special{sh 1}%
\special{pa 4138 1400}%
\special{pa 4118 1335}%
\special{pa 4109 1357}%
\special{pa 4085 1357}%
\special{pa 4138 1400}%
\special{fp}%
% STR 2 0 3 0
% 3 3676 623 3676 700 2 0
% $\alpha_3(2)$
\put(261.4808,-49.7923){\makebox(0,0)[lb]{$\alpha_3(2)$}}%
\end{picture}%
\vskip 4mm
%WinTpicVersion3.08
\unitlength 1pt
\begin{picture}(316.8926,102.5011)(  2.1340,-131.6651)
% VECTOR 1 0 3 0
% 8 1355 1145 1630 1453 1353 1154 1070 1453 1070 1453 1276 1833 1072 1449 872 1851
% 
\special{pn 13}%
\special{pa 1334 1127}%
\special{pa 1605 1431}%
\special{fp}%
\special{sh 1}%
\special{pa 1605 1431}%
\special{pa 1576 1369}%
\special{pa 1569 1391}%
\special{pa 1547 1395}%
\special{pa 1605 1431}%
\special{fp}%
\special{pa 1332 1136}%
\special{pa 1054 1431}%
\special{fp}%
\special{sh 1}%
\special{pa 1054 1431}%
\special{pa 1113 1396}%
\special{pa 1090 1392}%
\special{pa 1084 1370}%
\special{pa 1054 1431}%
\special{fp}%
\special{pa 1054 1431}%
\special{pa 1256 1805}%
\special{fp}%
\special{sh 1}%
\special{pa 1256 1805}%
\special{pa 1243 1738}%
\special{pa 1232 1758}%
\special{pa 1208 1756}%
\special{pa 1256 1805}%
\special{fp}%
\special{pa 1056 1427}%
\special{pa 859 1822}%
\special{fp}%
\special{sh 1}%
\special{pa 859 1822}%
\special{pa 906 1772}%
\special{pa 882 1775}%
\special{pa 871 1754}%
\special{pa 859 1822}%
\special{fp}%
% STR 2 0 3 0
% 3 1381 1055 1381 1124 2 0
% $\alpha_6(4)$
\put(98.2331,-79.9522){\makebox(0,0)[lb]{$\alpha_6(4)$}}%
% STR 2 0 3 0
% 3 1349 551 1349 620 5 0
% $V_6$
\put(95.9569,-44.1018){\makebox(0,0){$V_6$}}%
% STR 2 0 3 0
% 3 1040 1370 1040 1443 3 0
% $\alpha_6(2)$
\put(73.9772,-102.6433){\makebox(0,0)[rb]{$\alpha_6(2)$}}%
% STR 2 0 3 0
% 3 1626 1428 1626 1499 5 0
% $\alpha_6(6)$
\put(115.6605,-106.6267){\makebox(0,0){$\alpha_6(6)$}}%
% STR 2 0 3 0
% 3 869 1863 869 1935 5 0
% $\alpha_6(1)$
\put(61.8136,-137.6402){\makebox(0,0){$\alpha_6(1)$}}%
% STR 2 0 3 0
% 3 1279 1842 1279 1905 5 0
% $\alpha_6(3)$
\put(90.9777,-135.5063){\makebox(0,0){$\alpha_6(3)$}}%
% VECTOR 1 0 3 0
% 4 1343 637 1351 1146 2819 1099 2819 1099
% 
\special{pn 13}%
\special{pa 1322 627}%
\special{pa 1330 1128}%
\special{fp}%
\special{sh 1}%
\special{pa 1330 1128}%
\special{pa 1349 1063}%
\special{pa 1329 1076}%
\special{pa 1310 1063}%
\special{pa 1330 1128}%
\special{fp}%
\special{pa 2775 1082}%
\special{pa 2775 1082}%
\special{fp}%
% VECTOR 1 0 3 0
% 6 2558 757 2558 1197 2558 1197 2304 1534 2567 1190 2862 1530
% 
\special{pn 13}%
\special{pa 2518 746}%
\special{pa 2518 1179}%
\special{fp}%
\special{sh 1}%
\special{pa 2518 1179}%
\special{pa 2538 1113}%
\special{pa 2518 1126}%
\special{pa 2499 1113}%
\special{pa 2518 1179}%
\special{fp}%
\special{pa 2518 1179}%
\special{pa 2268 1510}%
\special{fp}%
\special{sh 1}%
\special{pa 2268 1510}%
\special{pa 2323 1470}%
\special{pa 2300 1468}%
\special{pa 2292 1446}%
\special{pa 2268 1510}%
\special{fp}%
\special{pa 2527 1172}%
\special{pa 2817 1506}%
\special{fp}%
\special{sh 1}%
\special{pa 2817 1506}%
\special{pa 2789 1444}%
\special{pa 2783 1467}%
\special{pa 2759 1470}%
\special{pa 2817 1506}%
\special{fp}%
% STR 2 0 3 0
% 3 2552 629 2552 700 5 0
% $V_7$
\put(181.5286,-49.7923){\makebox(0,0){$V_7$}}%
% STR 2 0 3 0
% 3 2284 1582 2284 1653 5 0
% $\alpha_7(1)$
\put(162.4652,-117.5810){\makebox(0,0){$\alpha_7(1)$}}%
% STR 2 0 3 0
% 3 2583 1113 2583 1178 2 0
% $\alpha_7(2)$
\put(183.7337,-83.7934){\makebox(0,0)[lb]{$\alpha_7(2)$}}%
% STR 2 0 3 0
% 3 2855 1544 2855 1613 5 0
% $\alpha_7(3)$
\put(203.0815,-114.7357){\makebox(0,0){$\alpha_7(3)$}}%
% VECTOR 1 0 3 0
% 10 3876 588 3882 1076 3882 1076 3522 1447 3882 1068 4244 1361 4244 1361 3970 1686 4224 1372 4485 1696
% 
\special{pn 13}%
\special{pa 3815 579}%
\special{pa 3821 1060}%
\special{fp}%
\special{sh 1}%
\special{pa 3821 1060}%
\special{pa 3840 994}%
\special{pa 3820 1007}%
\special{pa 3801 995}%
\special{pa 3821 1060}%
\special{fp}%
\special{pa 3821 1060}%
\special{pa 3467 1425}%
\special{fp}%
\special{sh 1}%
\special{pa 3467 1425}%
\special{pa 3527 1391}%
\special{pa 3503 1387}%
\special{pa 3499 1364}%
\special{pa 3467 1425}%
\special{fp}%
\special{pa 3821 1052}%
\special{pa 4178 1340}%
\special{fp}%
\special{sh 1}%
\special{pa 4178 1340}%
\special{pa 4139 1284}%
\special{pa 4137 1307}%
\special{pa 4115 1314}%
\special{pa 4178 1340}%
\special{fp}%
\special{pa 4178 1340}%
\special{pa 3908 1660}%
\special{fp}%
\special{sh 1}%
\special{pa 3908 1660}%
\special{pa 3965 1623}%
\special{pa 3941 1620}%
\special{pa 3936 1597}%
\special{pa 3908 1660}%
\special{fp}%
\special{pa 4158 1351}%
\special{pa 4415 1670}%
\special{fp}%
\special{sh 1}%
\special{pa 4415 1670}%
\special{pa 4389 1607}%
\special{pa 4382 1628}%
\special{pa 4359 1631}%
\special{pa 4415 1670}%
\special{fp}%
% STR 2 0 3 0
% 3 3902 963 3902 1032 2 0
% $\alpha_8(4)$
\put(277.5566,-73.4081){\makebox(0,0)[lb]{$\alpha_8(4)$}}%
% STR 2 0 3 0
% 3 3849 434 3849 505 5 0
% $V_8$
\put(273.7866,-35.9216){\makebox(0,0){$V_8$}}%
% STR 2 0 3 0
% 3 3499 1456 3499 1525 5 0
% $\alpha_8(5)$
\put(248.8905,-108.4761){\makebox(0,0){$\alpha_8(5)$}}%
% STR 2 0 3 0
% 3 4268 1273 4268 1345 2 0
% $\alpha_8(2)$
\put(303.5909,-95.6724){\makebox(0,0)[lb]{$\alpha_8(2)$}}%
% STR 2 0 3 0
% 3 3962 1700 3962 1770 5 0
% $\alpha_8(1)$
\put(281.8245,-125.9034){\makebox(0,0){$\alpha_8(1)$}}%
% STR 2 0 3 0
% 3 4479 1694 4479 1763 5 0
% $\alpha_8(3)$
\put(318.5997,-125.4055){\makebox(0,0){$\alpha_8(3)$}}%
\end{picture}%
\vskip 2mm
\end{figure*}
\end{example}
Of course, we can define $I_{\D(\alpha+\bf{1})}$ for any vector $\alpha \in (\mathbb{N}^n)^m$ in the obvious way. For such an $\alpha$, we say that it is $\D$-satisfying if the collection of numbers $((\alpha_i)_j+1)$ where $i=1,\ldots ,n$ and $v_i \notin F_j$ is $\D$-satisfying. 
\begin{corollary}\label{cone}
Let $\Delta$ be a Cohen-Macaulay complex without cycles in codimension 1 and $\alpha,\beta$ be vectors in $(\mathbb{N}^{n})^m$ such that $I_{\Delta(\alpha+\bf{1})},I_{\Delta(\beta+\bf{1})}$ are Cohen-Macaulay, then $I_{\Delta(\alpha+\beta+\bf{1})}$ is Cohen-Macaulay.
\end{corollary}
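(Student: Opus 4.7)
The plan is to reduce the statement to Theorem \ref{thm1} and observe that the $\Delta$-satisfying condition is preserved under addition, since it is defined by a system of linear inequalities on the exponents.

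First, I would unwind the definitions. By Theorem \ref{thm1}, the ideal $I_{\Delta(\alpha + \mathbf{1})}$ is Cohen-Macaulay if and only if the exponent vector $\alpha + \mathbf{1}$ is $\Delta$-satisfying in the sense of Definition \ref{df3}. Concretely, this means that for every $i \in \{1,\dots,n\}$ and every directed edge $(F_h, F_k)$ of $(G^i(\Delta), V_i)$ with $v_i \notin F_h, F_k$, we have
\[
(\alpha_i)_h + 1 \,\geq\, (\alpha_i)_k + 1, \qquad \text{i.e.,}\qquad (\alpha_i)_h \geq (\alpha_i)_k.
\]
The same applies verbatim to $\beta$: the Cohen-Macaulayness of $I_{\Delta(\beta + \mathbf{1})}$ gives $(\beta_i)_h \geq (\beta_i)_k$ for each directed edge $(F_h, F_k)$ of $(G^i(\Delta), V_i)$.

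Adding the two inequalities, I obtain
\[
(\alpha_i)_h + (\beta_i)_h + 1 \,\geq\, (\alpha_i)_k + (\beta_i)_k + 1
\]
for every $i$ and every directed edge $(F_h, F_k)$. This is precisely the condition that $(\alpha + \beta) + \mathbf{1}$ is $\Delta$-satisfying in the sense of Definition \ref{df3}. Applying Theorem \ref{thm1} in the other direction, we conclude that $I_{\Delta(\alpha + \beta + \mathbf{1})}$ is Cohen-Macaulay.

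There is no real obstacle here: the whole content is the observation that the Cohen-Macaulay condition, after the reformulation provided by Theorem \ref{thm1}, is cut out by a finite collection of linear inequalities $x_h \geq x_k$ on the exponents, and hence the set of admissible exponent tuples is closed under coordinatewise addition. The shift by $\mathbf{1}$ in the statement of the corollary is what makes the additive (rather than merely convex) formulation work, since we want $\alpha + \beta + \mathbf{1}$ rather than $\alpha + \beta + \mathbf{2}$ to appear.
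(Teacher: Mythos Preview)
Your proof is correct and follows exactly the same route as the paper: apply Theorem \ref{thm1} to convert Cohen--Macaulayness into the $\Delta$-satisfying condition, add the resulting linear inequalities, and apply Theorem \ref{thm1} again. The paper's write-up is terser (it uses the convention, introduced just before the corollary, that ``$\alpha$ is $\Delta$-satisfying'' means $\alpha+\mathbf{1}$ is $\Delta$-satisfying in the sense of Definition \ref{df3}), but the content is identical.
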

\begin{proof}
Because $I_{\Delta(\alpha+\bf{1})}$ and $I_{\Delta(\beta+\bf{1})}$ are Cohen-Macaulay, then  $\alpha$ and $\beta$ are $\Delta$-satisfying. Thus, $\alpha+\beta$ is $\Delta$-satisfying. So $I_{\Delta(\alpha+\beta+\bf{1})}$ is Cohen-Macaulay.
\end{proof}
Corollary \ref{cone} says that, if $\D$ is a Cohen-Macaulay complex without cycles in codimension 1, the set 
$$S=\{\alpha \in (\mathbb{N}^{n})^m \ : \ I_{\D(\alpha+\bf{1})} \mbox{ is Cohen-Macaulay}\}$$ 
is an affine semigroup. It is possible to describe a finite system of generators of $S$. 
%The followings need to generate the free part of $S$: for any $i=1,\ldots ,n$ and $j$ such that $v_i\in F_j$ set
%$$(\alpha_p)_q =
%\begin{cases}
%1 & \text { if } (p,q)=(i,j),\\
%0& \text{ otherwise}
%\end{cases}.$$
Fixed $i\in \{1,\ldots ,n\}$, the idea is to pick the vectors $\alpha_H=((\alpha_p)_q)$, for any poset ideal $H$ of $(G^i(\D),v_i)$, such that the nonzero entries of $\alpha$ are just in $\alpha_i$ and
$$(\alpha_i)_j =
\begin{cases}
1 & \text{ if } F_j \in G^i(\D)\setminus  H, \\
0& \text{ otherwise}
\end{cases}.$$
\begin{remark} The conclusion of Corollary \ref{cone} is not true for general complexes. For instance, consider the square 
$$<\{1,2\},\{2,3\},\{3,4\},\{4,1\}>.$$
\end{remark}
\begin{corollary}
\label{prop1}
Let $\Delta$ be a Cohen-Macaulay complex without cycles in codimension 1 and 
$$\alpha_i(j) =
\begin{cases}
a_i & \text { if } i\in H,j\in K,\\
1& \text{otherwise},
\end{cases}$$
where $H$ is a subset of $[n]$, $K$ is a subset of $[m]$ and $a_i$ are integer numbers bigger than $1$ for all $i\in H$.
Then $I_{\Delta(\alpha)}$ is Cohen-Macaulay if and only if $G^i(\Delta)_{|\{V_i\}\cup\{F_j|j\in K\}}$ are trees for all $i\in H.$
\end{corollary}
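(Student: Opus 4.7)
The strategy is to invoke Theorem \ref{thm1}: $I_{\Delta(\alpha)}$ is Cohen-Macaulay if and only if $\alpha$ is $\Delta$-satisfying, i.e., $\alpha_i$ is $G^i(\Delta)$-satisfying for every $i=1,\ldots,n$. So my plan is to check, for each $i$ separately, when the monotonicity condition on $\alpha_i$ along the rooted tree $(G^i(\Delta),V_i)$ holds, and show that this reduces exactly to the combinatorial hypothesis in the statement.

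First, for $i \notin H$, the vector $\alpha_i$ is identically $1$ on its domain, so it is trivially $G^i(\Delta)$-satisfying. This leaves only the indices $i\in H$ to analyze. Fix $i\in H$ and write
$$S_i \;=\; \{F_j : j \in K\}\cap V(G^i(\Delta))$$
for the set of facets appearing as vertices of $G^i(\Delta)$ where $\alpha_i$ takes the value $a_i$; on the remaining non-root vertices of $G^i(\Delta)$, $\alpha_i$ equals $1$. Since $\alpha_i$ takes only the two values $1$ and $a_i$ with $a_i>1$, the inequality $\alpha_i(h)\geq \alpha_i(k)$ along a directed edge $(F_h,F_k)$ of $(G^i(\Delta),V_i)$ is automatic when $\alpha_i(k)=1$ and, when $\alpha_i(k)=a_i$, is equivalent to $\alpha_i(h)=a_i$. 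Hence $\alpha_i$ is $G^i(\Delta)$-satisfying precisely when the set $\{V_i\}\cup S_i$ is closed under taking parents in the tree $(G^i(\Delta),V_i)$.

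The crux is then to translate this ancestor-closedness into the statement that the induced subgraph $G^i(\Delta)_{|\{V_i\}\cup\{F_j\,|\,j\in K\}}$ is a tree. By Lemma \ref{Gi}, $G^i(\Delta)$ is itself a tree, so every induced subgraph is a forest and ``being a tree'' is equivalent to ``being connected''. For one direction, if $\{V_i\}\cup S_i$ is ancestor-closed, then for each $F_k\in S_i$ the unique $V_i$-to-$F_k$ path in $G^i(\Delta)$ lies entirely in $\{V_i\}\cup S_i$, which shows the induced subgraph is connected. Conversely, if this induced subgraph is connected, then for any $F_k\in S_i$ its parent in $(G^i(\Delta),V_i)$ must appear on the unique path from $V_i$ to $F_k$ inside the subgraph, hence lies in $\{V_i\}\cup S_i$. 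Combining this with the trivial case $i\notin H$ and Theorem \ref{thm1} yields the corollary. I do not expect a real obstacle: the only technical point to note is that some $F_j$ with $j\in K$ may contain $v_i$ and so fail to appear as a vertex of $G^i(\Delta)$, but those indices are also excluded from the domain of $\alpha_i$, so the correspondence between $S_i$ and the nontrivial level set of $\alpha_i$ is consistent.
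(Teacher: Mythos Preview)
Your proof is correct and follows the same route as the paper: both reduce the question to Theorem \ref{thm1} and then argue that, for $i\in H$, the vector $\alpha_i$ is $G^i(\Delta)$-satisfying if and only if the induced subgraph on $\{V_i\}\cup\{F_j:j\in K\}$ is a tree. The paper's proof merely asserts this equivalence, whereas you supply the details (the ancestor-closed characterization and its equivalence to connectedness via uniqueness of paths in a tree), and you also correctly flag the harmless issue of facets $F_j$ with $j\in K$ that contain $v_i$.
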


\begin{proof}
If $G^i(\Delta)_{|\{V_i\}\cup\{F_j|j\in K\}}$ are trees for all $i\in H,$ we have $\alpha_i$ is $G^i(\Delta)$-satisfying for all $i=1,\dots,n.$ It implies that $I_{\Delta(\alpha)}$ is Cohen-Macaulay.\\
Conversely, if $G^i(\Delta)_{|\{V_i\}\cup\{F_j|j\in K\}}$ is not a tree for some $i$, then $\alpha_i$ is not $G^i(\D)$-satisfying. Therefore we conclude by Theorem \ref{thm1}.
\end{proof}
%Let $H$ be a subset of $[n]$ and $K$ be a subset of $[m]$. We want to notice situation in the paper of Ahmad and Naeem \cite{AN} for Cohen-Macaulay complex without cycles in codimension 1 case, by using the collection $\alpha$ as following form:
%$$\alpha_i(j) =
%\begin{cases}
%2 & \text { if } i\in H,j\in K,\\
%1& \text {otherwise}.
%\end{cases}$$
%We set: 
%$$\bar\alpha_i(j) =
%\begin{cases}
%a_i & \text { if } i\in H,j\in K,\\
%1& \text {otherwise},
%\end{cases}$$
%where $a_i$ are positive integer numbers such that $a_i\geq 2.$\\
%We have the following corollary:
%\begin{corollary}
%Let $\Delta$ be a Cohen-Macaulay complex without cycles in codimension 1. Then $I_{\Delta(\alpha)}$ is Cohen-Macaulay if and only if $I_{\Delta(\bar\alpha)}$ is Cohen-Macaulay.
%\end{corollary}
\section{The Cohen-Macaulayness for a strongly connected quasi-tree}
Let $\Delta$ be a $(d-1)$-dimensional simplicial complex.  Denote by $f_i$ the number of $i$-dimensional faces of $\Delta$. The vector $f(\Delta)=(f_0,f_1,\dots,f_{d-1})$ is called \textit{f-vector} of $\Delta.$
The Hilbert series of the Stanley-Reisner ring is:
$$ H_{k[\Delta]}(t)=\frac{h_0+h_1t+\cdots +h_st^s}{(1-t)^d},$$
where $s\leq d$.
The finite sequence of integers $h(\Delta)=(h_0,h_1,\dots ,h_s)$ is called the \textit{h-vector} of $\Delta.$ The multiplicity of the Stanley-Reisner ring is $e(k[\Delta])=\sum_{i=0}^sh_i.$ The $h$-vector and the $f$-vector of a simplicial complex are related by a formula. In particular, we have: 
$$ h_0=1, h_1=f_0-d \text{ and } \sum_{i=0}^sh_i=f_{d-1},$$
for instance see \cite[Corollary 5.1.9]{BH}. 
So, $e(k[\Delta])\geq1+(n-d)$ for all Cohen-Macaulay simplicial complexes $\Delta$. A Cohen-Macaulay simplicial complex has\textit{ minimal multiplicity} if $e(k[\Delta])=1+(n-d)$.

We recall the following definition. The facet $F$ of $\Delta$ is called a \textit{leaf} of $\Delta$ if there exists a facet $G$ such that $(H\cap F)\subseteq (G\cap F)$  for all $H\in \Im(\Delta).$ The facet $G$ is called a \textit{branch} of $F$. A simplicial complex $\Delta$ is called a \textit{quasi-forest} if there exists a total order $\Im (\Delta)=\{F_1,\dots,F_m\}$ such that $F_i$ is a leaf of $<F_1,\dots,F_i>$ for all $i=1,\dots,m$. This order is called a \textit{leaf order} of the quasi-forest. A connected quasi-forest is called a \emph{quasi-tree}. For properties about quasi-tree see the paper of the first author with Constantinescu \cite{CN}. Maybe the following statement is already known. However we did not find it anywhere, so we prefer to include a proof here.
\begin{proposition}\label{scqt}
Let $\Delta$ be a simplicial complex. The following conditions are equivalent:\\
(i) $\Delta$ is a strongly connected complex with minimal multiplicity;\\
(ii) $\Delta$ is a Cohen-Macaulay complex with minimal multiplicity;\\
(iii) $\Delta$ is a shellable complex with minimal multiplicity;\\
(iv) $\Delta$ is a strongly connected quasi-tree.
\end{proposition}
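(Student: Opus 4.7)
The plan is to close the cycle (iii) $\Rightarrow$ (ii) $\Rightarrow$ (i) $\Rightarrow$ (iv) $\Rightarrow$ (iii). The implications (iii) $\Rightarrow$ (ii) and (ii) $\Rightarrow$ (i) are immediate from the facts recalled in Section 2 (shellable $\Rightarrow$ Cohen-Macaulay $\Rightarrow$ strongly connected), and the minimal multiplicity hypothesis, being purely numerical, carries through. So the real work is in (i) $\Rightarrow$ (iv) and (iv) $\Rightarrow$ (iii).

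For (i) $\Rightarrow$ (iv) I would use a spanning-tree traversal of the facet graph. Since $\D$ is strongly connected, the facets can be ordered $F_1,\dots,F_m$ so that for every $i\ge 2$ there is some $j(i)<i$ with $|F_i\cap F_{j(i)}|=d-1$. The $d-1$ vertices of $F_i\cap F_{j(i)}$ already lie in $F_1\cup\cdots\cup F_{i-1}$, so $F_i$ contributes at most one new vertex to this union, yielding $n\le d+(m-1)$, with equality iff every $F_i$ ($i\ge 2$) introduces exactly one fresh vertex $v_i\notin F_1\cup\cdots\cup F_{i-1}$. The hypothesis $m=1+(n-d)$ forces this equality. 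Then $v_i\notin F_k$ for every $k<i$, so $F_k\cap F_i\subseteq F_i\setminus\{v_i\}=F_i\cap F_{j(i)}$, which exhibits $F_{j(i)}$ as a branch of $F_i$ inside $\langle F_1,\dots,F_i\rangle$. Thus the ordering is a leaf order, $\D$ is a quasi-forest, and being strongly connected (hence connected) it is a strongly connected quasi-tree.

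For (iv) $\Rightarrow$ (iii) I would induct on $m$, the case $m=1$ being trivial. Pick any leaf order $F_1,\dots,F_m$ of $\D$; then $F_m$ is a leaf of $\D$ with some branch, say $\Gamma$. Strong connectedness supplies a facet $H$ with $|H\cap F_m|=d-1$, and the containment $H\cap F_m\subseteq \Gamma\cap F_m$ together with $\Gamma\ne F_m$ forces $|\Gamma\cap F_m|=d-1$. Set $\D'=\langle F_1,\dots,F_{m-1}\rangle$; this is a quasi-tree (the truncated sequence is still a leaf order), and it is strongly connected because any facet-graph path in $\D$ through $F_m$ enters and leaves $F_m$ along a codimension-one face contained in $\Gamma\cap F_m$, so the two neighbors of $F_m$ on that path also share a codimension-one face with $\Gamma$ and the path reroutes through $\Gamma$. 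By induction $\D'$ has a shelling $F_1',\dots,F_{m-1}'$ in which $|R(F_i')|=1$ for every $i\ge 2$. Appending $F_m$ is legal because $\langle F_m\rangle\cap\D'=\langle \Gamma\cap F_m\rangle$, a single codimension-one face of $F_m$, yielding a shelling step with $|R(F_m)|=1$. Hence $\D$ is shellable, and the $h$-vector $(1,m-1,0,\dots,0)$ together with the universal identity $h_1=n-d$ for pure $(d-1)$-complexes forces $m=1+(n-d)$, i.e.\ minimal multiplicity.

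The delicate point is the strong connectedness of $\D\setminus\{F_m\}$, which is not formal: it depends on the fact — itself a consequence of strong connectedness of $\D$ combined with the leaf property — that the branch $\Gamma$ of the leaf $F_m$ already meets $F_m$ in codimension one, so that $\Gamma$ is a universal codimension-one neighbor capable of absorbing every facet-graph path through $F_m$. Once this is in place the inductive peeling runs smoothly and the multiplicity bookkeeping becomes purely formal.
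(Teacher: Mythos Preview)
Your proof is correct and rests on the same key observation as the paper's: ordering the facets so that each prefix is strongly connected and then counting to see that, under minimal multiplicity, every $F_i$ with $i\ge 2$ contributes exactly one new vertex. The paper is extremely terse---after establishing this counting fact it simply declares (i)--(iv) ``easily seen to be equivalent''---so your leaf-peeling induction for (iv) $\Rightarrow$ (iii), including the rerouting argument for strong connectedness of $\langle F_1,\dots,F_{m-1}\rangle$, is an honest completion of what the paper leaves to the reader rather than a different approach.
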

\begin{proof} We assume $\Delta$ is a $(d-1)$-dimensional simplicial complex with $n$ vertices and $m$ facets.

If $\Delta$ is strongly connected, we build the facets order by choosing the facet $F_i$ such that $<F_1,\dots,F_i>$ is strongly connected for all $i=1,\dots,m.$ We have $$|F_i\setminus \bigcup_{j=1}^{i-1}F_j|\leq 1,$$ for all $i=1,\dots,m.$ However, $e(k[\Delta])=1+(n-d)=m$. So, $n=d+(m-1).$ This implies $|F_i\setminus \bigcup_{j=1}^{i-1}F_j|=1$ for all $i=2,\dots,m.$ By this fact, (i),(ii),(iii) and (iv) are easily seen to be equivalent.
\end{proof}
Notice that by Proposition \ref{scqt} one can easily deduce that the notion of ``strongly connected quasi-tree" coincides with the one of ``tree" introduced in the paper of Jarrah and Laubenbacher \cite[Definition 4.4]{JL}. However, we do not call them trees because such a term is also used by other authors with a different meaning (for instance see the paper of Faridi \cite[Definition 9]{Fa}). An interesting consequence of Proposition \ref{scqt} and \cite[Theorem 4.10]{JL} is that strongly connected quasi-trees are exactly the clique complexes of a chordal graph.

\begin{remark}
(i) $\Delta$ is a Cohen-Macaulay complex without cycles in codimension 1 $\Rightarrow$ $\Delta$ is a strongly connected quasi-tree.\\
(ii) The converse is not true. For example, $\Delta=<\{1,2\},\{1,3\},\{1,4\}>$.
\end{remark}
\begin{definition}
Let $\Delta$ be a strongly connected quasi-tree with the leaf order $F_1,\dots,F_m.$ We define \textit{a relation tree of $\Delta$}, denoted by $T(\Delta)$, in the following way:
\begin{itemize}
  \item  The vertices of $T(\Delta)$ are the facets of $\Delta$.
  \item  The edges are obtained recursively as follows: 
 \begin{itemize}
  \item[-] Take the leaf $F_m$ of $\Delta$ and choose a branch $G$ of $F_m$. 
  \item[-] Set $\{F_m,G\}$ to be an edge of $T(\Delta)$. 
  \item[-] Remove $F_m$ from $\Delta$ and proceed with the remaining complex as before to determine the other edges of $T(\Delta)$.
\end{itemize}
\end{itemize}   
\end{definition}
\begin{remark} (i)The graph $T(\Delta)$ depends on the leaf order and the choice of the branch for each leaf. However it is always a tree.\\
(ii) The tree $T(\D)$ is a spanning tree of $G(\D)$.\\
(iii) If $\D$ is a Cohen-Macaulay complex without cycles in codimension 1, then the relation tree of $\D$ is $G(\D)$.
\end{remark}
\begin{lemma}
\label{mms}
Let $\Delta$ be a strongly connected quasi-tree with the relation tree $T(\Delta)$ and $F_1,F_2,\dots,F_k$ adjacent vertices in $T(\Delta).$ If the vertex $v\in F_1\cap F_k$, then $v\in F_i$ for all $i=1,\dots,k.$
\end{lemma}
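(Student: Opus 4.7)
The plan is to argue by induction on the number of facets $m$ of $\Delta$. The cases $m \leq 2$ are immediate since any path in $T(\Delta)$ then has at most two distinct vertices. For the inductive step, let $F$ denote the last facet in the leaf order used to build $T(\Delta)$. By the recursive construction of the relation tree, $F$ is a \emph{leaf of $T(\Delta)$}: only one edge of $T(\Delta)$ is incident to $F$, and its other endpoint is the chosen branch $G$ of $F$ in $\Delta$. Crucially, the defining property of a branch gives
\[
H \cap F \subseteq G \cap F \quad \text{for every facet } H \in \Im(\Delta).
\]

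Let $\Delta' = \langle \Im(\Delta) \setminus \{F\} \rangle$. With the inherited leaf order, $\Delta'$ is again a strongly connected quasi-tree, and continuing the construction of $T(\Delta)$ after removing $F$ is precisely the construction of $T(\Delta')$; hence $T(\Delta')$ is obtained from $T(\Delta)$ by deleting the leaf $F$ together with the edge $\{F,G\}$.

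Now consider the path $F_1, F_2, \ldots, F_k$ in $T(\Delta)$ with $v \in F_1 \cap F_k$. If $F$ does not occur among $F_1,\ldots,F_k$, the entire path lies inside $T(\Delta')$, and the inductive hypothesis applied to $\Delta'$ yields $v \in F_i$ for all $i$. Otherwise, because $F$ is a leaf of $T(\Delta)$, it can only appear at an endpoint of the path; after reversing if necessary we may assume $F = F_1$. Then the unique edge of $T(\Delta)$ at $F_1$ is $\{F_1,G\}$, so $F_2 = G$. Applying the branch property with $H = F_k$ gives
\[
v \in F_1 \cap F_k \subseteq G \cap F = F_2 \cap F_1,
\]
hence $v \in F_2$. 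The shortened path $F_2, F_3, \ldots, F_k$ lies in $T(\Delta')$ with $v \in F_2 \cap F_k$, so the inductive hypothesis produces $v \in F_i$ for $i = 2, \ldots, k$; combined with $v \in F_1$ this finishes the proof.

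The only point requiring genuine care is the interaction between the leaf order of $\Delta$ and that of $\Delta'$: one needs that removing $F$ preserves both the quasi-tree structure and the strong connectedness of every prefix so that the inductive hypothesis truly applies to $\Delta'$, and that the last facet in any leaf order corresponds to a leaf of the resulting $T(\Delta)$. Both follow directly from the recursive definition of $T(\Delta)$ and from the construction of the leaf order in Proposition \ref{scqt}, so no further obstacle arises.
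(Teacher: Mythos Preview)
Your proof is correct and takes a genuinely different route from the paper's. The paper argues directly along the given path: writing $F_i = G_{t_i}$ in the leaf order, it observes that for each edge $\{F_j,F_{j+1}\}$ of $T(\Delta)$ the facet with smaller index is the branch of the other, so the sequence $t_1,\ldots,t_k$ can only decrease and then increase (each $G_i$ with $i\geq 2$ has a unique lower-indexed neighbor, so a second ``valley'' would force a repeated vertex). It then peels off the endpoint with largest $t$-index using the branch containment, reducing to a shorter path. Your argument instead peels off a leaf of the tree $T(\Delta)$ itself---namely the last facet $F$ in the leaf order---and inducts on the number of facets of $\Delta$. The advantage of your approach is that it mirrors the recursive definition of $T(\Delta)$ exactly, so no case analysis on the index pattern is needed and the role of the branch inequality $H\cap F\subseteq G\cap F$ is isolated in a single step. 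The paper's approach has the mild benefit of not requiring one to verify that $\Delta'$ is again a strongly connected quasi-tree (your final paragraph handles this, and indeed Proposition~\ref{scqt} supplies it), but otherwise the two arguments are of comparable length and yours is arguably cleaner.
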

\begin{proof} Let $G_1,\dots,G_m$ be the leaf order corresponding with the relation tree $T(\Delta)$ and $F_i=G_{t_i}$ for all $i=1,\dots,k$.
Because $F_1,F_2,\dots,F_k$ are adjacent vertices in $T(\Delta)$, for all $i=1,\dots,k-1$ we have:
\begin{itemize}
\item [-] If $t_i<t_{i+1},$ then $F_i$ is a branch of $F_{i+1}.$
\item [-] If $t_i>t_{i+1},$ then $F_{i+1}$ is a branch of $F_i.$
\end{itemize} 
We have two following cases:\\
\textbf{case 1:} $t_1<t_2<\cdots<t_k.$ So, $F_i$ is a branch of $F_{i+1}$ for all $i=1,\dots,k-1$. This implies $(F_1\cap F_k)\subseteq (F_{k-1}\cap F_k)$, $(F_1\cap F_{k-1})\subseteq (F_{k-2}\cap F_{k-1})$,\dots, $(F_1\cap F_3)\subseteq (F_2\cap F_3).$ Hence, $v\in F_i$ for all $i=1,\dots,k.$\\
\textbf{case 2:} $t_1>t_2>\cdots>t_h<t_{h+1}<\cdots<t_k.$ We can assume $t_1<t_k$, then $t_k$ is the biggest number in $\{t_1,\dots,t_k\}$. So, $v\in F_1\cap F_k\subseteq F_{k-1}\cap F_k.$ This implies $v\in F_1\cap F_{k-1}$. We continue with the pair $(t_1,t_{k-1})$, so on. Hence, $v\in F_i$ for all $i=1,\dots,k.$
\end{proof}
For all $i=1,\dots,n$, we define the graph $T^i(\D)$ with the set of vertices $V(T^i(\D))=V(G^i(\D))$ and the set of edge $E(T^i(\D))=E(G^i(\D))\cap E(T(\D))$. By Lemma \ref{mms}, we have:
\begin{corollary}
With the above assumptions, $T^i(\Delta)$ are trees for all $i=1,\dots,n.$ 
\end{corollary}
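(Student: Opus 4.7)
The plan is to realize $T^i(\Delta)$ as the graph obtained from the tree $T(\Delta)$ by contracting, to the single vertex $V_i$, the subgraph spanned by those facets of $\Delta$ that contain $v_i$. Since contracting a subtree of a tree always yields a tree, it will suffice to show two things: (a) the set $S_i := \{F \in \Im(\Delta) : v_i \in F\}$ induces a connected (hence subtree) subgraph of $T(\Delta)$, and (b) the resulting contracted graph coincides with $T^i(\Delta)$.

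For step (a), I would take arbitrary $F,G \in S_i$ and consider the unique path $F = F_1, F_2, \ldots, F_k = G$ joining them in $T(\Delta)$. Since $v_i \in F_1 \cap F_k$, Lemma \ref{mms} applies and forces $v_i \in F_j$ for every $j = 1, \ldots, k$; hence the whole path lies in $S_i$, so $T(\Delta)|_{S_i}$ is connected. Because $\Delta$ is pure of dimension $d-1$ and $\{v_i\} \in \Delta$, the set $S_i$ is nonempty, so this subtree is well-defined.

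For step (b), collapsing $T(\Delta)|_{S_i}$ to a single vertex identified with $V_i$ yields a graph with vertex set $\{V_i\} \cup \{F \in \Im(\Delta) : v_i \notin F\} = V(G^i(\Delta))$. Its edges come in two flavors: facet-facet edges $\{F,G\}$ of $T(\Delta)$ with $v_i \notin F \cup G$, and edges $\{V_i, F\}$ produced by edges $\{H,F\}$ of $T(\Delta)$ with $H \in S_i$ and $v_i \notin F$. The first type lies in $E(G^i(\Delta))$ by definition, while the second does too, since the facet $H$ satisfies $H \ni v_i$ and $|H \cap F| = d-1$ (the latter because $\{H,F\} \in E(T(\Delta)) \subseteq E(G(\Delta))$), which is exactly the defining condition for $\{V_i,F\}$ to be an edge of $G^i(\Delta)$. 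Identifying this contracted graph with $T^i(\Delta)$ then concludes the proof.

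I do not expect any serious obstacle beyond the invocation of Lemma \ref{mms}: that lemma is precisely what forces $S_i$ to cut out a subtree rather than a mere disconnected collection of vertices inside $T(\Delta)$. After that, the argument is a purely formal graph contraction, and the verification that the contracted edges belong to $E(G^i(\Delta))$ is immediate from the definitions.
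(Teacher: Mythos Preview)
Your argument is correct and is exactly the content the paper leaves implicit when it writes ``By Lemma~\ref{mms}'': that lemma forces the facets containing $v_i$ to span a subtree of $T(\Delta)$, and contracting a subtree of a tree yields a tree, which---under the natural quotient reading of the edge set $E(G^i(\Delta))\cap E(T(\Delta))$---is precisely $T^i(\Delta)$. The only point you leave slightly tacit is that the contracted graph is \emph{equal} to $T^i(\Delta)$ rather than merely contained in $G^i(\Delta)$, but this is immediate once one observes that every image of an edge of $T(\Delta)$ under the contraction already lies in $E(G^i(\Delta))$, so the intersection with $E(G^i(\Delta))$ does not discard anything.
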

We consider the directed trees $(T^i(\D),V_i)$.
\begin{definition}\label{qts}
Let $\Delta$ be a strongly connected quasi-tree and $\alpha=(\alpha_i(j))$ for $i=1,\dots,n$ and $j$ such that $v_i\notin F_j.$ The collection $\alpha$ is called \textit{$\Delta$-satisfying} if there exists a relation tree $T(\Delta)$ such that if the directed edge $(F_h,F_k)\in E((T^i(\Delta),V_i))$ then $\alpha_i(h)\geq \alpha_i(k).$
\end{definition}
The proof of Lemma \ref{lm3} works also if $\D$ is a strongly connected quasi-tree. So, arguing as in the proof of Theorem \ref{thm1}, we have:
\begin{theorem}
\label{thm2}
Let $\Delta$ be a strongly connected quasi-tree and  $\alpha$ be $\Delta$-satisfying. Then, $I_{\Delta(\alpha)}$ is Cohen-Macaulay.
\end{theorem}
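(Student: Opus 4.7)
The plan is to mimic the inductive argument in the proof of Theorem \ref{thm1}, replacing $G(\D)$ with the relation tree $T(\D)$ throughout. Fix a relation tree $T(\Delta)$ witnessing that $\alpha$ is $\Delta$-satisfying, and let $F_1,\dots,F_m$ be an associated leaf order of $\D$ (built by repeatedly stripping off the current leaf that is also a leaf of $T(\D)$). Set $\Delta_j=\langle F_1,\dots,F_j\rangle$ and $I_{\Delta_j(\alpha)}=\bigcap_{t=1}^j Q_t$, and induct on $m$. The base case $m=1$ is trivial since $R/Q_1$ is a monomial complete intersection. For the inductive step, the subcomplex $\Delta_{m-1}$ is again a strongly connected quasi-tree whose relation tree is $T(\D)\setminus\{F_m\}$, and for each $i$ the directed tree $(T^i(\Delta_{m-1}),V_i)$ is obtained from $(T^i(\Delta),V_i)$ by deleting the vertex $F_m$ (if $v_i\notin F_m$) together with its unique incident edge. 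Hence every directed edge of $(T^i(\Delta_{m-1}),V_i)$ is already a directed edge of $(T^i(\Delta),V_i)$, and the required inequalities are inherited; so the restriction of $\alpha$ is $\Delta_{m-1}$-satisfying, and by induction $R/I_{\Delta_{m-1}(\alpha)}$ is Cohen-Macaulay of dimension $d$.

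With the inductive hypothesis in hand, I would apply the standard Mayer--Vietoris sequence
\[
0 \to R/I_{\Delta_m(\alpha)} \to R/I_{\Delta_{m-1}(\alpha)} \oplus R/Q_m \to R/(I_{\Delta_{m-1}(\alpha)}+Q_m) \to 0.
\]
Both $R/I_{\Delta_{m-1}(\alpha)}$ and $R/Q_m$ are $d$-dimensional Cohen--Macaulay rings, the latter because $Q_m$ is a monomial complete intersection. The quasi-tree analog of Lemma \ref{lm3}, which the excerpt explicitly endorses, yields $i\in\{1,\dots,n\}$ and $s\geq 1$ with $I_{\Delta_{m-1}(\alpha)}+Q_m=(x_i^s)+Q_m$. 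Since $F_m$ is a leaf of the strongly connected quasi-tree and $e(k[\Delta])=1+(n-d)$ (Proposition \ref{scqt}), a branch $F_h$ of $F_m$ satisfies $|F_m\cap F_h|=d-1$, and $v_i$ is forced to be the unique vertex of $F_m\setminus F_h$; in particular $x_i$ does not appear among the generators of $Q_m$, so $(x_i^s)+Q_m$ is a monomial complete intersection of pure powers in $n-d+1$ variables. Consequently $R/(I_{\Delta_{m-1}(\alpha)}+Q_m)$ is Cohen--Macaulay of dimension $d-1$, and the depth lemma \cite[Proposition 1.2.9]{BH} applied to the exact sequence above forces $R/I_{\Delta_m(\alpha)}$ to be Cohen--Macaulay of dimension $d$, completing the induction.

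The main obstacle is the bookkeeping in the inductive step, namely verifying that the restriction of the $\Delta$-satisfying condition to $\Delta_{m-1}$ is $\Delta_{m-1}$-satisfying with respect to the truncated relation tree. One must check that no new directed edges (with reversed orientation) appear in $(T^i(\Delta_{m-1}),V_i)$ compared with $(T^i(\Delta),V_i)$; this is true because the orientation of a directed tree is determined by the unique path from $V_i$, and removing a single leaf from $T^i(\Delta)$ only deletes vertices and edges and never reverses an existing path. Once this combinatorial point and the quasi-tree analog of Lemma \ref{lm3} are granted, the remaining ingredients -- the short exact sequence, the identification of $(x_i^s)+Q_m$ as a complete intersection, and the depth lemma -- are identical to those in the proof of Theorem \ref{thm1}, which is exactly the meaning of the phrase ``arguing as in the proof of Theorem \ref{thm1}''.
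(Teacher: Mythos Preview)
Your proposal is correct and follows exactly the strategy the paper indicates: you replace $G(\Delta)$ by the relation tree $T(\Delta)$, carry out the induction of Theorem~\ref{thm1} along the associated leaf order, invoke the quasi-tree analog of Lemma~\ref{lm3}, and conclude via the same short exact sequence and \cite[Proposition 1.2.9]{BH}. Your added bookkeeping (that removing the leaf $F_m$ from $T(\Delta)$ yields the relation tree of $\Delta_{m-1}$ and preserves all orientations in $(T^i(\cdot),V_i)$) is exactly the content of the phrase ``arguing as in the proof of Theorem~\ref{thm1}''.
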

The converse is not true. For example, let $\Delta$ be the strongly connected quasi-tree with the set of facets:
$$\Im(\Delta)=\{\{1,6\},\{2,6\},\{3,6\},\{4,6\},\{5,6\}\}.$$ The graph $G(\Delta)$ is the complete graph on $\{F_1,\dots,F_5\}.$ The Stanley-Reisner ideal $I_{\Delta}$ is:
{\footnotesize $$  (x_2,x_3,x_4,x_5)\cap(x_1,x_3,x_4,x_5)\cap(x_1,x_2,x_4,x_5)\cap(x_1,x_2,x_3,x_5)\cap(x_1,x_2,x_3,x_4). $$}
Consider $I_{\Delta(\alpha)}$:
{\footnotesize $$ (x_2^2,x_3,x_4,x_5)\cap(x_1,x_3^2,x_4,x_5)\cap(x_1,x_2,x_4^2,x_5)\cap(x_1,x_2,x_3,x_5^2)\cap(x_1^2,x_2,x_3,x_4). $$
}
It is easy to check that $I_{\Delta(\alpha)}$ is Cohen-Macaulay but  $\alpha$ is not $\Delta$-satisfying.

\vskip 2mm

We end the paper by observing that we do not see how to extend the obtained results to more general simplicial complexes.

Given a shellable simplicial complex $\D$ with $\Im (\Delta)=\{F_1,\dots,F_m\}$, we could define a collection of positive integers $\alpha=(\alpha_i(j))$, for $i=1,\dots,n$ and $j$ such that $v_i\notin F_j$, to be \textit{$\Delta$-satisfying} if: {\it For any $i=1,\ldots ,n$ there exists a shelling $F_{i_1},\ldots ,F_{i_m}$ such that: 
\begin{enumerate}
\item There exists $p=1,\ldots ,m$ for which $v_i\in \bigcap_{h=1}^pF_{i_h}$ and $v_i \notin \bigcup_{h=p+1}^m  F_{i_h}$.
\item If $\alpha_i(i_t)>\alpha_i(i_s)$, then $t<s$.
\end{enumerate}}
\noindent It is easy to see that Definitions \ref{df3} and \ref{qts} are included in the one above. However the analog of Theorem \ref{thm2} does not hold in the general setting. For instance consider $\D$ to be the square and the collection $\alpha$ corresponding to the following ideal:
$$I_{\D(\alpha)} = (x_1,x_2^2)\cap (x_1,x_3^3)\cap (x_2^3,x_4)\cap (x_3^2,x_4).$$
Albeit $\alpha$ is $\D$-satisfying, $I_{\D(\alpha)}$ is not Cohen-Macaulay.

We can prove that $I_{\D(\alpha)}$ is Cohen-Macaulay whenever $\alpha$ is $\D$-satisfying and there is an index $i=1,\ldots ,n$ \ such that $\alpha_j$ is constant for any $j\neq i$. But this is not so nice, since in general, given a vertex of a shellable simplicial complex, we cannot find any shelling for which the first condition of the general definition of ``$\D$-satisfying" holds.

\begin{example}\label{murai}
The following example, due to Hachimori (\cite{Ha}), is a modification of the dunce hat. Consider the $2$-dimensional simplicial complex $\D$:

\begin{figure*}[!ht]
\centering
%WinTpicVersion3.08
\unitlength 1pt
\begin{picture}(186.9346,174.6999)(113.8822,-196.5374)
% LINE 1 0 3 0
% 38 2974 540 1753 2684 1753 2684 4201 2684 4201 2684 2967 540 2967 540 2742 1677 2742 1677 3178 1677 3178 1677 3171 2108 3178 2108 2749 2108 2749 2108 2749 1670 2749 1670 2435 1490 2960 547 3171 1670 3171 1670 3499 1475 1766 2677 2749 1662 2742 1670 3171 2101 1766 2670 2749 2094 2749 2094 2578 2684 2585 2670 3171 2108 3171 2108 3321 2663 3178 2108 4181 2670 4181 2670 3171 1677
% 
\special{pn 13}%
\special{pa 2928 532}%
\special{pa 1726 2642}%
\special{fp}%
\special{pa 1726 2642}%
\special{pa 4135 2642}%
\special{fp}%
\special{pa 4135 2642}%
\special{pa 2921 532}%
\special{fp}%
\special{pa 2921 532}%
\special{pa 2699 1651}%
\special{fp}%
\special{pa 2699 1651}%
\special{pa 3128 1651}%
\special{fp}%
\special{pa 3128 1651}%
\special{pa 3122 2075}%
\special{fp}%
\special{pa 3128 2075}%
\special{pa 2706 2075}%
\special{fp}%
\special{pa 2706 2075}%
\special{pa 2706 1644}%
\special{fp}%
\special{pa 2706 1644}%
\special{pa 2397 1467}%
\special{fp}%
\special{pa 2914 539}%
\special{pa 3122 1644}%
\special{fp}%
\special{pa 3122 1644}%
\special{pa 3444 1452}%
\special{fp}%
\special{pa 1739 2635}%
\special{pa 2706 1636}%
\special{fp}%
\special{pa 2699 1644}%
\special{pa 3122 2068}%
\special{fp}%
\special{pa 1739 2628}%
\special{pa 2706 2062}%
\special{fp}%
\special{pa 2706 2062}%
\special{pa 2538 2642}%
\special{fp}%
\special{pa 2545 2628}%
\special{pa 3122 2075}%
\special{fp}%
\special{pa 3122 2075}%
\special{pa 3269 2622}%
\special{fp}%
\special{pa 3128 2075}%
\special{pa 4116 2628}%
\special{fp}%
\special{pa 4116 2628}%
\special{pa 3122 1651}%
\special{fp}%
% STR 2 0 3 0
% 3 2933 425 2933 497 2 0
% 2
\put(208.6298,-35.3525){\makebox(0,0)[lb]{2}}%
% STR 2 0 3 0
% 3 2380 1389 2380 1461 3 0
% 3
\put(169.2939,-103.9237){\makebox(0,0)[rb]{3}}%
% STR 2 0 3 0
% 3 3560 1367 3560 1439 2 0
% 3
\put(253.2295,-102.3588){\makebox(0,0)[lb]{3}}%
% STR 2 0 3 0
% 3 1691 2619 1691 2691 3 0
% 1
\put(120.2840,-191.4159){\makebox(0,0)[rb]{1}}%
% STR 2 0 3 0
% 3 2803 1569 2803 1641 2 0
% 4
\put(199.3827,-116.7274){\makebox(0,0)[lb]{4}}%
% STR 2 0 3 0
% 3 3260 1648 3260 1720 2 0
% 5
\put(231.8900,-122.3469){\makebox(0,0)[lb]{5}}%
% STR 2 0 3 0
% 3 2783 2008 2783 2080 2 0
% 7
\put(197.9600,-147.9543){\makebox(0,0)[lb]{7}}%
% STR 2 0 3 0
% 3 3226 2008 3226 2080 2 0
% 6
\put(229.4715,-147.9543){\makebox(0,0)[lb]{6}}%
% STR 2 0 3 0
% 3 2564 2670 2564 2742 1 0
% 2
\put(182.3822,-195.0436){\makebox(0,0)[lt]{2}}%
% STR 2 0 3 0
% 3 3301 2691 3301 2763 1 0
% 3
\put(234.8064,-196.5374){\makebox(0,0)[lt]{3}}%
% STR 2 0 3 0
% 3 4229 2691 4229 2763 2 0
% 1
\put(300.8168,-196.5374){\makebox(0,0)[lb]{1}}%
% STR 2 0 3 0
% 3 2278 2475 2278 2547 2 0
% \large{F}
\put(162.0384,-181.1729){\makebox(0,0)[lb]{\large{F}}}%
% STR 2 0 3 0
% 3 2046 2670 2046 2742 1 0
% \large{e}
\put(145.5358,-195.0436){\makebox(0,0)[lt]{\large{e}}}%
\end{picture}%

\end{figure*}

\noindent The above simplicial complex is easily seen to be shellable. However for any shelling $F_1,\ldots ,F_{13}$ we must have $F_{13}=F$. In fact $e$ is the only boundary of $\D$, so if $F_{13} \neq F$ then $\D_{12}\cap <F_{13}> = \partial F_{13}$, where $\D_{12}$ denotes the simplicial complex $<F_1,\ldots ,F_{12}>$. The Mayer-Vietories sequence yields the below long exact sequence of singular homology groups:
$$\ldots \rightarrow H_2(\D)\rightarrow H_1(\D_{12}\cap <F_{13}>)\rightarrow H_1(\D_{12})\oplus H_1(<F_{13}>)\rightarrow \ldots .$$
Because $\D_{12}$ is a $2$-dimensional shellable simplicial complex, Reisner's theorem (see \cite[Corollary 5.3.9]{BH}) implies $H_1(\D_{12})=H_1(<F_{13}>)=0$. On the other hand $H_1(\D_{12}\cap <F_{13}>)=H_1(\partial F_{13})\neq 0$. Thus the above exact sequence yields $H_2(\D)\neq 0$. But this is a contradiction, since, as it is easy to show, $\D$ is collapsible, in particular contractible.
  \end{example}

\vspace{1cm}


\begin{thebibliography}{99}
\bibitem[AN]{AN} S.~Ahmad and N.~Naeem, \emph{Cohen-Macaulay monomial ideals with given radical }, to appear on Journal of Pure and Applied Algebra (2010).

 \bibitem[B]{B} A.~Br\"{o}rner, \emph{Topological Methods}, in ''Handbook of Combinatorics'' (eds. R. Graham, M. Gr\"{o}tschel and L. Lov\'{a}sz), North-Holland, Amsterdam (1995) pp. 1819--1872. 
 
\bibitem[BH]{BH} W.~Bruns and J.~Herzog, \emph{Cohen-Macaulay Rings}, Cambridge Studies in Advanced Mathematics, No. $\bold{39}$, revised edition (1998), Cambridge, U.K.

\bibitem[CT]{CT}   CoCoA Team,  {\em  A system for doing Computations in Commutative Algebra}, available at http://cocoa.dima.unige.it

\bibitem[CN]{CN} A.~ Constantinescu and L.D.~Nam , \emph{The standard graded property for vertex cover algebras of Quasi-Trees}, Le Matematiche, Vol. LXIII (2008) Fasc. II, pp. 173--183. 

\bibitem[Fa]{Fa} S.~Faridi,  \emph{The facet ideal of a simplicial complex}, Manuscripta math. 109 (2002), 159--174. 

\bibitem[Ha]{Ha} M.~Hachimori,  \emph{Simplicial Complex Library}, available at http://infoshako.sk.tsukuba.ac.jp/~hachi/. 

\bibitem[HTT]{HTT} J.~Herzog, Y.~Takayama, N.~Terai,  \emph{On the radical of a monomial ideal}, Arch. Math. (Basel) 85 (2005), no. 5, 397--408. 

\bibitem[JL]{JL} A.~S.~Jarrah, R.~Laubenbacher,  \emph{Generic Cohen-Macaulay monomial ideals}, Ann. Comb. 8 (2004), no. 1, 45--61. 

\bibitem[MS]{MS} E.~Miller, B.~Sturmfels,  \emph{Combinatorial commutative algebra}, Graduate Texts in Mathematics (2004) Vol. 227 (Springer). 

\bibitem[MSY]{MSY} E.~Miller, B.~Sturmfels, K.~Yanagawa,  \emph{Generic and cogeneric monomial ideals}, J. Symbolic Comput. 29 (2000), no. 4-5, 691--708. 

\bibitem[MT]{MT} N.C.~Minh and N.V.~Trung, \emph{Cohen-Macaulayness of monomial ideals and symbolic powers of Stanley-Reisner ideals}, arXiv: 1003.2152v1 [math.AC] (2010).

\bibitem[St1]{St1} R.~Stanley, \emph{Cohen-Macaulay complexes}, in Higher Combinatorics (M. Aigner, ed.), Reidel, Dordrecht/Boston (1977) pp. 51--62.
\bibitem[St2]{St2} R.~Stanley, \emph{Combinatorics and Commutative Algebra}, Progress in Mathematics (1996) Vol. 41(Birkh\"auser).

\bibitem[Va]{Va} M.~Varbaro, \emph{Symbolic powers and matroids}, to appear in Proceedings of the AMS, arXiv: 1003.2912 [math.AC] (2010).

\bibitem[Wh]{Wh} N.~White, \emph{Matroid Applications}, Encyclopedia of Mathematics and its
Applications (2009) 40, Cambridge University Press.
\end{thebibliography}
\end{document}